\newtheorem{theorem}{Theorem}[section]
\newtheorem{lemma}[theorem]{Lemma}
\newtheorem{rmk}{Remark}[section]
\newcommand{\argmax}{\operatornamewithlimits{argmax}}
\DeclareMathOperator{\sgn}{sgn}
\numberwithin{equation}{section}
\def\R{\mathbb{R}}
\begin{document}

\title{A Class of Solvable Multidimensional Stopping Problems in the Presence of Knightian Uncertainty}

\author{Luis H. R. Alvarez E.\thanks{Department of Accounting and Finance, Turku School of Economics,
FIN-20014 University of Turku, Finland, E-mail: lhralv@utu.fi}\quad
S\"oren Christensen\thanks{Mathematisches Seminar,
Christian-Albrechts-Universität zu Kiel,
Ludewig-Meyn-Str. 4,
D-24098 Kiel,
Germany,  E-mail: christensen@math.uni-kiel.de}}
\maketitle

\abstract{We investigate the impact of Knightian uncertainty on the optimal timing policy of an ambiguity averse decision maker in the case where the underlying factor dynamics follow a multidimensional Brownian motion and the exercise payoff depends on either a linear combination of the factors or the radial part of the driving factor dynamics. We present a general characterization of the value of the optimal timing policy and the worst case measure in terms of a family of an explicitly identified excessive functions generating an appropriate class of supermartingales.
In line with previous findings based on linear diffusions, we find that ambiguity accelerates timing in comparison with the unambiguous setting. Somewhat surprisingly, we find that ambiguity may result into stationarity in models which typically do not possess stationary behavior. In this way, our results indicate that ambiguity may act as a stabilizing mechanism.}\\

\noindent{\bf AMS Subject Classification:} 60J60, 60G40, 62L15, 91G80 \\

\noindent{\bf Keywords:} $\kappa$-ambiguity, multidimensional Brownian motion, diffusion processes, Bessel processes.

\thispagestyle{empty} \clearpage \setcounter{page}{1}

\section{Introduction}

Gaussian processes and, more precisely, Brownian motion plays a prominent role in modeling factor dynamics in standard
financial models considering the optimal timing of irreversible decisions in the presence of uncertainty. In the benchmark
setting all the uncertainty affecting the decision is summarized into a single probability measure describing completely
the probabilistic structure of the underlying intertemporally fluctuating factor dynamics. However, as originally pointed out in
\cite{Kn21}, in reality there are circumstances where a decision maker faces unmeasurable uncertainty on the plausibility
or credibility of a particular probability measure (so-called {\em Knightian uncertainty}). In such a case a decision maker
may have to make a decision based on several or even a continuum of different measures describing the probabilistic structure of the
alternative states of the world.

Ambiguity was first rigorously axiomatized based on the pioneering work by \citet{Kn21}
in a atemporal multiple priors setting in
by \citet{GiSch89} (for further refinements, see also \citet{Be02}, \citet{Kli_et_al_05},
\citet{Ma_et_al_06} and \citet{NiOz06}). The atemproal axiomatization was subsequently extended into an intertemporal
recursive multiple priors setting by, among others, \cite{EpWa94}, \citet{ChEp02}, \cite{EpMi03}, and \citet{EpSch03}.
The impact of ambiguity on optimal timing decisions was originally studied in
\cite{NiOz04} in a job search model. They subsequently extended their original analysis in \cite{NiOz07} by considering the impact of Knightian uncertainty on the optimal timing decisions of irreversible investment opportunities in a continuous time model based on geometric Brownian motion. \cite{Al07} focused on the impact of Knightian uncertainty on monotone one-sided stopping problems and expressed the value as well as the optimality conditions for the stopping boundaries in terms of the minimal excessive mappings of the underlying diffusion under the worst case measure.
\cite{Ri2009}, in turn, analyzed discrete time optimal stopping problems in the presence of ambiguity aversion and developed a general minmax martingale approach for solving the considered problems (see also \cite{MiWa_11} for an analysis of the problem for a general discrete time Feller-continuous Markov processes). The approach developed in \cite{Ri2009} was subsequently extended to a continuous time setting in \cite{ChRi2013}. In \cite{ChRi2013},  the value of the optimal policy was proven to be the smallest right continuous $g$-martingale dominating the exercise payoff process.
\cite{Chr13} investigated the optimal stopping of linear diffusions by ambiguity averse decision makers in the presence of Knightian uncertainty and identified explicitly the minimal excessive mappings generating the worst case measure as well as the appropriate class of supermartingales needed for the characterization of the value of the optimal policy. {\cite{EpJi2019} investigated optimal learning in the case where the underlying driving Brownian motion is subject to drift ambiguity}. More recently, \cite{AlCh2019} extended the approach developed in \cite{Chr13} to a multidimensional setting and investigated the impact of Knightian uncertainty on the optimal timing policies of ambiguity averse investors in the case where the exercise payoff is positively homogeneous and the underlying diffusion is a two-dimensional geometric Brownian motion. They found that in a multidimensional case, ambiguity does not only affect the optimal policy by altering the rate at which the underlying processes are expected to grow, it also impacts the rate at which the problem is discounted.

Given the findings in \cite{AlCh2019}, our objective in this paper is to analyze the impact of Knightian uncertainty on the optimal timing policy of an ambiguity averse decision maker in the case where the underlying follows a multidimensional Brownian motion. We study the general stopping problem and identify two special cases under which the problem can be explicitly solved by reducing the dimensionality of the problem and then utilizing the approach developed in \cite{Chr13}. We characterize the value and optimal timing policies as the smallest majorizing element of the exercise payoff in a parameterized function space.  Our results demonstrate that Knightian uncertainty does not only accelerate the optimal timing policy in comparison with the unambiguous benchmark case, it also may result into stationary behavior to the controlled system even when the underlying system does not possess a long run stationary distribution. This observation illustrates how ambiguity may in some cases have a nontrivial impact on the stochastic dynamics of the underlying processes under the worst case measure.

The contents of this paper are as follows. In Section 2 we present the underlying stochastic dynamics, state the considered class of optimal stopping problems and state a characterization of the impact of ambiguity on the optimal timing policy and its value. In Section 3 we focus on payoffs depending on linear combinations of the driving factors. In Section 4 we then focus on radially symmetric payoffs. Finally, Section 5 concludes our study.

\section{Underlying Dynamics and Problem Setting}

Let $\mathbf W$ be $d$-dimensional standard Brownian motion under the measure $\mathbb{P}$ and assume that $d\geq 2$.
As usually in models subject to Knightian uncertainty, let the degree of ambiguity $\kappa>0$ be given and denote by $\mathcal{P}^\kappa$ the set of all probability measures, that are equivalent to $\mathbb{P}$ with density process of the form
$$
\mathcal{M}_t^{{\bm \theta}}=e^{-\int_0^t {\bm \theta}_s d\mathbf{W}_s - \frac{1}{2}\int_0^t \|{\bm \theta_s}\|^2 ds}
$$
for a progressively measurable process $\{{\bm \theta}_t\}_{t\geq 0}$ satisfying the inequality $\|{\bm \theta}_t \|^2\leq \kappa^2$ for all $t\geq 0$. That is, we assume that the density generator processes satisfy the inequality $\sum_{i=1}^d\theta_{it}^2\leq \kappa^2$ for all $t\geq 0$.

Assume now that $\mathbf{X}_t=\mathbf{x}+\mathbf{W}_t$ denotes the underlying diffusion under the measure $\mathbb{P}$. Our objective is now to consider the following optimal stopping problem
\begin{align}
V_\kappa(\bm x) = \sup_{\tau\in \mathcal{T}}\inf_{\mathbb{Q}^{{\bm \theta}}\in \mathcal{P}^\kappa}\mathbb{E}_{\bm x}^{{\mathbb{Q}^{{\bm \theta}}}}\left[e^{-r\tau}F(\mathbf{X}_\tau)\mathbbm{1}_{\{\tau < \infty\}}\right],\label{stopping}
\end{align}
where $F:\mathbb{R}^d\mapsto \mathbb{R}$ is a measurable function which will be specified below in the two cases considered in this paper. As usually, we denote by $C_\kappa=\{\bm x\in \mathbb{R}^d:V_\kappa(\bm x)>F(\bm x)\}$ the continuation region where stopping is suboptimal and by $\Gamma_\kappa=\{\bm x\in \mathbb{R}^d:V_\kappa(\bm x)=F(\bm x)\}$ the stopping region. The specification of the considered stopping problem results into the following lemma characterizing the impact of ambiguity on the optimal stopping policy and its value in a general setting.
\begin{lemma}\label{l1}
Increased ambiguity accelerates optimal timing by decreasing the value of the optimal policy and, thus, shrinking the continuation region where waiting is optimal. Formally, if $\hat{\kappa}>\kappa$ then $V_{\hat{\kappa}}(\bm x)\leq V_{\kappa}(\bm x)$ for all $\bm x\in\mathbb{R}^d$ and $C_{\hat{\kappa}}\subseteq C_{\kappa}$.
\end{lemma}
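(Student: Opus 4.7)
The plan is to exploit the monotonicity of the family $\mathcal{P}^{\kappa}$ in the ambiguity parameter $\kappa$. The key observation is straightforward: if $\hat{\kappa} > \kappa$, then every progressively measurable density generator $\{{\bm \theta}_t\}$ satisfying $\|{\bm \theta}_t\|^2 \leq \kappa^2$ automatically satisfies $\|{\bm \theta}_t\|^2 \leq \hat{\kappa}^2$. Consequently, every density process $\mathcal{M}^{\bm \theta}_t$ admissible at ambiguity level $\kappa$ is also admissible at level $\hat{\kappa}$, and therefore $\mathcal{P}^{\kappa} \subseteq \mathcal{P}^{\hat{\kappa}}$.

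Given this inclusion, I would argue as follows. Fix any stopping time $\tau \in \mathcal{T}$ and any $\bm x \in \mathbb{R}^d$. Since the infimum over a larger set is no larger than the infimum over a smaller set, we obtain
\begin{equation*}
\inf_{\mathbb{Q}^{\bm \theta} \in \mathcal{P}^{\hat{\kappa}}} \mathbb{E}^{\mathbb{Q}^{\bm \theta}}_{\bm x}\!\left[e^{-r\tau} F(\mathbf{X}_\tau)\mathbbm{1}_{\{\tau<\infty\}}\right]
\;\leq\;
\inf_{\mathbb{Q}^{\bm \theta} \in \mathcal{P}^{\kappa}} \mathbb{E}^{\mathbb{Q}^{\bm \theta}}_{\bm x}\!\left[e^{-r\tau} F(\mathbf{X}_\tau)\mathbbm{1}_{\{\tau<\infty\}}\right].
\end{equation*}
Taking the supremum over $\tau \in \mathcal{T}$ on both sides preserves the inequality, yielding $V_{\hat{\kappa}}(\bm x) \leq V_{\kappa}(\bm x)$ for all $\bm x \in \mathbb{R}^d$.

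The inclusion of the continuation regions then follows at once from the pointwise inequality. Indeed, if $\bm x \in C_{\hat{\kappa}}$ then $V_{\hat{\kappa}}(\bm x) > F(\bm x)$, and combined with the first part this gives $V_{\kappa}(\bm x) \geq V_{\hat{\kappa}}(\bm x) > F(\bm x)$, so $\bm x \in C_{\kappa}$.

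The only point that might in principle require care is ensuring that each $\mathcal{M}^{\bm \theta}$ satisfying $\|{\bm \theta}\|^2 \leq \kappa^2$ is indeed a true martingale (so that it defines an equivalent probability measure), but this is automatic from Novikov's condition applied to the uniform bound $\|{\bm \theta}_t\|^2 \leq \kappa^2$. Hence no genuine obstacle arises; the lemma is essentially a monotonicity statement about min-max values driven by a nested family of priors.
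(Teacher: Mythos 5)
Your proof is correct and follows essentially the same route as the paper: both rely on the nesting $\mathcal{P}^{\kappa}\subseteq\mathcal{P}^{\hat{\kappa}}$ to compare the inner infima, then pass to the supremum over stopping times, and then derive the inclusion of continuation regions from the pointwise inequality. The additional remark about Novikov's condition is a reasonable sanity check, though the paper leaves it implicit.
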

\begin{proof}
Assume that $\hat{\kappa}>\kappa$. Since $\{{\bm \theta}\in \mathbb{R}^d:\|{\bm \theta} \|^2\leq \kappa^2\}\subset \{{\bm \theta}\in \mathbb{R}^d:\|{\bm \theta} \|^2\leq \hat{\kappa}^2\}$ we notice that
$$
\inf_{\mathbb{Q}^{{\bm \theta}}\in \mathcal{P}^{\hat{\kappa}}}\mathbb{E}_{\bm x}^{{\mathbb{Q}^{{\bm \theta}}}}\left[e^{-r\tau}F(\mathbf{X}_\tau)\mathbbm{1}_{\{\tau < \infty\}}\right]\leq \inf_{\mathbb{Q}^{{\bm \theta}}\in \mathcal{P}^{\kappa}}\mathbb{E}_{\bm x}^{{\mathbb{Q}^{{\bm \theta}}}}\left[e^{-r\tau}F(\mathbf{X}_\tau)\mathbbm{1}_{\{\tau < \infty\}}\right]
$$
implying that $V_{\hat{\kappa}}(\bm x)\leq V_{\kappa}(\bm x)$ for all $\bm x\in\mathbb{R}^d$. Assume now that ${\bm x}\in C_{\hat{\kappa}}$. Since in that case $V_{\kappa}(\bm x)\geq  V_{\hat{\kappa}}(\bm x) > F(\bm x)$ we notice that ${\bm x}\in C_{\kappa}$ as well. Consequently, $C_{\hat{\kappa}}\subseteq C_{\kappa}$ completing the proof of our lemma.
\end{proof}
Lemma \ref{l1} shows that the sign of the relationship between the degree of ambiguity and optimal timing is positive. At the same time, increased ambiguity decreases the value of the optimal stopping policy showing that the highest value is attained in the absence of ambiguity. This mechanism is naturally not that surprising since it essentially states that the larger the set of potentially detrimental outcomes gets, the smaller is the achievable value.

We now notice that under the measure $\mathbb{Q}^{{\bm \theta}}$ defined by the likelihood ratio
$$
\frac{d\mathbb{Q}^{{\bm \theta}}}{d\mathbb{P}}=\mathcal{M}_t^{{\bm \theta}}
$$
we naturally have that
$$
\mathbf{X}_t = \mathbf{x} -\int_{0}^{t}\bm{\theta}_sds + \mathbf{W}_t^{{\bm \theta}},
$$
where $\mathbf{W}_t^{{\bm \theta}}$ denotes $\mathbb{Q}^{{\bm \theta}}$-Brownian motion. Introduce the differential operator associated with the underlying processes $\mathbf{X}$ under the measure $\mathbb{Q}^{{\bm \theta}}\in \mathcal{P}^\kappa$ by
$$
\mathcal{A}^{{\bm \theta}}=\frac{1}{2}\sum_{i=1}^d\frac{\partial^2}{\partial x_i^2}-\sum_{i=1}^d\theta_i\frac{\partial}{\partial x_i}.
$$
For a twice continuously differentiable function $u:\mathbb{R}^d\mapsto \mathbb{R}_+$,
the It{\^o}-Döblin theorem yields that under the measure $\mathbb{Q}^{{\bm \theta}}\in \mathcal{P}^\kappa$
\begin{align}\label{id}
\begin{split}
e^{-rt}u(\mathbf{X}_{t})=u(\mathbf{x}) + \int_0^{t}e^{-rs}\left((\mathcal{A}^{{\bm \theta}}u)(\mathbf{X}_{s})-ru(\mathbf{X}_{s})\right)ds
+\int_0^t e^{-rs}\nabla u(\mathbf{X}_{s})\cdot d\mathbf{W}_s^{{\bm \theta}}.
\end{split}
\end{align}
Now, minimizing $(\mathcal{A}^{{\bm \theta}}u)({\bm x})$ with respect to ${\bm \theta}$ under the condition $\|{\bm \theta}\|^2\leq \kappa^2$ leads to the worst case density generator
\[{\bm\theta}^*_t=\kappa \frac{\nabla u(\mathbf{X}_{t})}{\|\nabla u(\mathbf{X}_{t})\|},\]
where $\|\cdot\|$ denotes the standard Euclidean norm. Assume now that there exists a twice continuously differentiable function $\bar{u}:\mathbb{R}^d\mapsto \mathbb{R}_+$ satisfying the partial differential equation
\begin{align}
\frac{1}{2}(\Delta\bar{u})(\mathbf{x})-\kappa \|\nabla \bar{u}(\mathbf{x})\|-r\bar{u}(x)=0\label{PDE}
\end{align}
on some  $G\subseteq \mathbb{R}^d$.
In that case
\begin{align}\label{subharm}
\begin{split}
e^{-rT}\bar{u}(\mathbf{X}_{T})&=\bar{u}(\mathbf{x}) + \int_0^{T}e^{-rs}\left(\kappa\|\nabla \bar{u}(\mathbf{X}_s)\|-{\bm \theta}_s\cdot\nabla \bar{u}(\mathbf{X}_s)\right)ds
+\int_0^T e^{-rs}\nabla \bar{u}(\mathbf{X}_{s})\cdot d\mathbf{W}_s^{{\bm \theta}}\\
&\geq \bar{u}(\mathbf{x}) +\int_0^T e^{-rs}\nabla \bar{u}(\mathbf{X}_{s})\cdot d\mathbf{W}_s^{{\bm \theta}},
\end{split}
\end{align}
where $T=t\wedge \inf\{t\geq 0:\mathbf{X}_t\not \in A\}$ and $A\subseteq G$ is open with compact closure in $G$. Taking expectations result in
\begin{align*}
\mathbb{E}^{\mathbb{Q}^{{\bm \theta}}}_{\mathbf{x}}\left[e^{-rT}\bar{u}(\mathbf{X}_{T})\right]\geq \bar{u}(\mathbf{x})
\end{align*}
with identity only when ${\bm \theta}^\ast={\bm \theta}$.
Unfortunately, solving the partial differential equation \eqref{PDE} explicitly is typically impossible. Fortunately, there are two cases where dimension reduction techniques apply and permit the transformation of the original multidimensional problem into a solvable one-dimensional setting. We will focus on these problems in the following sections.

\section{Payoff Depending on a Linear Combination of Factors}
Linear combinations of independent normally distributed random variables are normally distributed. On the other hand, linear combinations of independent Brownian motions are continuous martingales and, hence, constitute a time change of Brownian motion. Given these observations, consider now the case where the exercise payoff reads as
\begin{align}\label{linearpayoff}
F({\bf x})= \hat{F}\left(\mathbf{a}^T\mathbf{x}\right) = \hat{F}\left(\sum_{i=1}^{d}a_ix_i\right),
\end{align}
where $\mathbf{a}\in \mathbb{R}^d$ is a constant parameter vector and $\hat{F}:\mathbb{R}\mapsto \mathbb{R}$ is a measurable function. Focusing now on functions
\[u({\bf x})= h\left(\mathbf{a}^T\mathbf{x}\right) \]
results into the worst case prior characterized by the density generator
\[{\bm\theta}^*=\kappa\sgn(h'(\mathbf{a}^T\mathbf{x})) \frac{{\bm a}}{\|{\bm a}\|}.\]
In this case, solving
\[(\mathcal{A}^{{\bm \theta}^*}u)({\bm x})=ru({\bm x})\]
results into solving
\[\frac{1}{2}\|\mathbf{a}\|^2h''(\mathbf{a}^T\mathbf{x})-\kappa\| \mathbf{a} \| h'(\mathbf{a}^T\mathbf{x})-r h(\mathbf{a}^T\mathbf{x})=0\]
on $\{{\bm x}:h'(\mathbf{a}^T\mathbf{x})\geq 0\}$ and
\[\frac{1}{2}\|\mathbf{a}\|^2h''(\mathbf{a}^T\mathbf{x})+\kappa\| \mathbf{a} \| h'(\mathbf{a}^T\mathbf{x})-r h(\mathbf{a}^T\mathbf{x})=0\]
on on $\{{\bm x}:h'(\mathbf{a}^T\mathbf{x})<0\}$. Defining now the constants
\begin{align*}
\psi_{\kappa} &=\frac{\kappa}{\|\mathbf{a}\|}+\sqrt{\frac{\kappa^2}{\|\mathbf{a}\|^2}+\frac{2r}{\|\mathbf{a}\|^2}},\\
\varphi_{\kappa} &=\frac{\kappa}{\|\mathbf{a}\|}-\sqrt{\frac{\kappa^2}{\|\mathbf{a}\|^2}+\frac{2r}{\|\mathbf{a}\|^2}},
\end{align*}
$\hat{\psi}_\kappa=-\varphi_\kappa$, and $\hat{\varphi}_\kappa=-\psi_\kappa$
then shows that
$$
h(y)=c_1 e^{\psi_{\kappa} y} + c_2 e^{\varphi_{\kappa} y}
$$
on $\{y:h'(y)\geq 0\}$ and
$$
h(y)=\hat{c}_1 e^{\hat{\psi}_\kappa y} + \hat{c}_2 e^{\hat{\varphi}_\kappa y}
$$
on $\{y:h'(y)< 0\}$. Given these functions, let $c\in \mathbb{R}$ be an arbitrary reference point and define the twice continuously differentiable and strictly convex function $U_c:\mathbb{R}\mapsto \mathbb{R}$ as $U_c(y)=\max(h_{1c}(y),h_{2c}(y))$, where
\begin{align*}
h_{1c}(y)&= \frac{\psi_{\kappa}}{\psi_{\kappa}-\varphi_{\kappa}}e^{\varphi_{\kappa} (y-c)}-\frac{\varphi_{\kappa}}{\psi_{\kappa}-\varphi_{\kappa}}e^{\psi_{\kappa} (y-c)}\\
h_{2c}(y)&= \frac{\hat{\psi}_{\kappa}}{\hat{\psi}_{\kappa}-\hat{\varphi}_{\kappa}}e^{\hat{\varphi}_{\kappa} (y-c)}-\frac{\hat{\varphi}_{\kappa}}{\hat{\psi}_{\kappa}-\hat{\varphi}_{\kappa}}e^{\hat{\psi}_{\kappa} (y-c)}
\end{align*}
are two mappings satisfying the conditions $h_{1c}(c)=h_{2c}(c)=1$ and $h_{1c}'(c)=h_{2c}'(c)=0$. Therefore, the function $U_c$ constitutes the solution of the boundary value problem
\begin{align*}
&\frac{1}{2}\|\mathbf{a}\|^2U_c''(\mathbf{a}^T\mathbf{x})-\kappa\sgn(\mathbf{a}^T\mathbf{x}-c)\| \mathbf{a} \| U_c'(\mathbf{a}^T\mathbf{x})-r U_c(\mathbf{a}^T\mathbf{x})=0\\
&U_c(c)=1,\quad U_c'(c)=0.
\end{align*}
Analogously, we let
$U_{-\infty}(y)=e^{\psi_{\kappa} y}$ and $U_{\infty}(y)=e^{\hat{\varphi}_\kappa y}$ denote the solutions associated with the extreme cases where $c=-\infty$ or $c=\infty$.
As was demonstrated in \cite{Chr13} these functions generate an useful class of supermartingales for solving optimal stopping problems in the presence of ambiguity.
To see that this is indeed the case in this multidimensional setting as well, we notice by applying the It{\^o}-Döblin theorem to the function $U_c$ that
\begin{align*}
e^{-rT}U_c(\mathbf{a}^T{\bf X}_T) &= U_c\left(\mathbf{a}^T\mathbf{x}\right)+\int_0^Te^{-rt}\left(\kappa\sgn(\mathbf{a}^T\mathbf{X}_t-c)\|\mathbf{a}\|-\mathbf{a}^T{\bm\theta}_t\right)U_c'(\mathbf{a}^T\mathbf{X}_t)dt\\
&+\int_0^T e^{-rt}U_c'(\mathbf{a}^T\mathbf{X}_t)\mathbf{a}^Td\mathbf{W}_t^{{\bm \theta}}.
\end{align*}
Since $-\kappa\|\mathbf{a}\|\leq -\mathbf{a}^T{\bm \theta} \leq \kappa\|\mathbf{a}\|$ for admissible density generators satisfying the condition $\|{\bm \theta}\|^2\leq \kappa^2$, we observe that
$\left(\kappa\sgn(\mathbf{a}^T\mathbf{x}-c)\|\mathbf{a}\|-\mathbf{a}^T{\bm\theta}\right)U_c'(\mathbf{a}^T\mathbf{x})\geq 0$ for all $\mathbf{x}\in \mathbb{R}^d$ and, therefore, that
\begin{align*}
e^{-rT}U_c(\mathbf{a}^T{\bf X}_T) \geq U_c\left(\mathbf{a}^T\mathbf{x}\right)+ \int_0^T e^{-rt}U_c'(\mathbf{a}^T\mathbf{X}_t)\mathbf{a}^Td\mathbf{W}_t^{{\bm \theta}}
\end{align*}
with identity only when ${\bm \theta}_t={\bm \theta}_t^\ast=\kappa\sgn(\mathbf{a}^T\mathbf{X}_t-c).$ Consequently, we notice that in the present case
\begin{align*}
\mathbb{E}^{\mathbb{Q}^{{\bm \theta}}}_{\mathbf{x}}\left[e^{-rT}U_c(\mathbf{a}^T{\bf X}_T)\right] \geq  \mathbb{E}^{\mathbb{Q}^{{\bm \theta}^\ast}}_{\mathbf{x}}\left[e^{-rT}U_c(\mathbf{a}^T{\bf X}_T)\right] = U_c\left(\mathbf{a}^T\mathbf{x}\right)
\end{align*}
for all $\mathbb{Q}^{{\bm \theta}}\in \mathcal{P}^\kappa$. Utilizing standard optional sampling arguments show that the process $\{e^{-rt}U_c(\mathbf{a}^T{\bf X}_t)\}_{t\geq 0}$ is actually a positive $\mathbb{Q}^{{\bm \theta}^\ast}$-martingale and, therefore, a supermartingale.\\

It is also at this point worth pointing out that the process $Y_t=\mathbf{a}^T\mathbf{X}_t$ satisfies the SDE
\begin{align}\label{sde}
dY_t = \mathbf{a}^Td\mathbf{X}_t=-\mathbf{a}^T{\bm \theta}_tdt+\mathbf{a}^Td\mathbf{W}_t^{{\bm \theta}},\quad Y_0=\mathbf{a}^T\mathbf{x}.
\end{align}
Since $-\kappa\|\mathbf{a}\|\leq -\mathbf{a}^T{\bm \theta}_t \leq \kappa\|\mathbf{a}\|$ for admissible density generators satisfying the condition $\|{\bm \theta}_t\|\leq \kappa$ we notice that
\eqref{sde} has a unique strong solution. Especially, under $\mathbb{Q}^{{\bm \theta}^\ast}$ we have
$$
dY_t=-\kappa\|\mathbf{a}\|\sgn(Y_t-c)dt+\mathbf{a}^Td\mathbf{W}_t^{{\bm \theta}^\ast},\quad Y_0=\mathbf{a}^T\mathbf{x},
$$
which is a standard Brownian motion with alternating drift. Interestingly, we observe that while standard Brownian motion does not have a stationary distribution, the controlled process does.
More precisely, for a fixed reference point $c$ the stationary distribution of the controlled diffusion reads as (a {\em Laplace}-distribution)
$$
p(\mathbf{a}^T\mathbf{x})=\frac{\kappa}{\|\mathbf{a}\|}e^{-\frac{2\kappa}{\|\mathbf{a}\|}|\mathbf{a}^T\mathbf{x}-c|}
$$
Moreover, the process $Y_t$ is positively recurrent meaning that hitting times to constant boundaries are almost surely finite.\\

Having characterized the underlying dynamics and the class of harmonic functions resulting into the class of supermartingales needed in the characterization of the value, we now observe that the conditions of Theorem 1 in \cite{Chr13} are satisfied and, therefore, that
we can characterize the value in a semiexplicit form as stated in the following.
\begin{theorem}\label{thm1}
(A) For all $\mathbf{x}\in \mathbb{R}^d$ we have that
\begin{align}\label{program}
V_\kappa(\bm x) = \inf\{\lambda U_c(\mathbf{a}^T\mathbf{x}):c\in[-\infty,\infty],\lambda\in[0,\infty],\lambda  U_c(\mathbf{a}^T\mathbf{x})\geq \hat{F}(\mathbf{a}^T\mathbf{x})\}
\end{align}
and the infimum with respect the reference point $c$ is a minimum.\\
(B) A point $\mathbf{x}\in \mathbb{R}^d$ is in the stopping region $\Gamma=\{\mathbf{x}\in \mathbb{R}^d: V_\kappa(\bm x)=\hat{F}(\mathbf{a}^T\mathbf{x})\}$ if, and only if, there exists a $c\in [-\infty,\infty]$ such that
$$
y_c\in \argmax\left\{\frac{\hat{F}(y)}{U_c(y)}\right\}
$$
and $\mathbf{a}^T\mathbf{x}=y_c$.
\end{theorem}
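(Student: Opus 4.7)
The overall strategy is to reduce the multidimensional problem to a one-dimensional optimal stopping problem for the projected process $Y_t=\mathbf{a}^T\mathbf{X}_t$ and then invoke Theorem~1 of \cite{Chr13}. The discussion preceding the theorem has already produced the two structural ingredients needed: the explicit parameterized family $\{U_c\}_{c\in[-\infty,\infty]}$ of candidate excessive functions, and the fact that $e^{-rt}U_c(Y_t)$ is a nonnegative supermartingale under every $\mathbb{Q}^{{\bm \theta}}\in\mathcal{P}^\kappa$ and a genuine martingale under the associated worst-case measure $\mathbb{Q}^{{\bm \theta}^\ast}$. These are precisely the hypotheses of the abstract one-dimensional result in \cite{Chr13}, so the proof essentially reduces to transcribing its conclusion after verifying the required integrability and growth conditions on $\hat{F}$.

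For the upper bound in~(A), I would fix any admissible pair $(\lambda,c)$ for which $\lambda U_c(y)\geq\hat{F}(y)$ holds globally in $y\in\mathbb{R}$. For an arbitrary stopping time $\tau$ and any admissible ${\bm \theta}$, the supermartingale property, applied after a localizing sequence and combined with Fatou's lemma on $\{\tau=\infty\}$, gives
\[
\mathbb{E}^{\mathbb{Q}^{{\bm \theta}}}_{\mathbf{x}}\!\left[e^{-r\tau}\hat{F}(Y_\tau)\mathbbm{1}_{\{\tau<\infty\}}\right]\leq\lambda\,\mathbb{E}^{\mathbb{Q}^{{\bm \theta}}}_{\mathbf{x}}\!\left[e^{-r\tau}U_c(Y_\tau)\mathbbm{1}_{\{\tau<\infty\}}\right]\leq\lambda U_c(\mathbf{a}^T\mathbf{x}).
\]
Taking the infimum over ${\bm \theta}$, then the supremum over $\tau$, and finally the infimum over admissible $(\lambda,c)$ delivers $V_\kappa(\mathbf{x})\leq\inf\{\cdot\}$.

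For the matching lower bound, the attainment of the outer infimum, and part~(B), the plan is to produce a saddle point for each $\mathbf{x}$. Continuity and monotonicity of $c\mapsto\sup_y \hat{F}(y)/U_c(y)$ together with a compactness argument on the compactified interval $[-\infty,\infty]$ show that the outer infimum is attained at some $c^\ast$, with corresponding $\lambda^\ast$ and a tightness point $y^\ast\in\argmax_y\{\hat{F}(y)/U_{c^\ast}(y)\}$. Setting $\tau^\ast=\inf\{t\geq 0:Y_t=y^\ast\}$ (more generally, the first entry of $Y$ into the set of such maximizers), the positive recurrence of $Y$ under $\mathbb{Q}^{{\bm \theta}^\ast}$, established through the stationary Laplace density derived in the SDE discussion above, forces $\tau^\ast<\infty$ almost surely, and the martingale identity under $\mathbb{Q}^{{\bm \theta}^\ast}$ then collapses the chain above into equalities, yielding $V_\kappa(\mathbf{x})\geq\lambda^\ast U_{c^\ast}(\mathbf{a}^T\mathbf{x})$. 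Part~(B) follows immediately: $\mathbf{x}\in\Gamma$ is equivalent to saturation of the constraint in~\eqref{program} at the starting point, which is precisely the condition $\mathbf{a}^T\mathbf{x}=y_c$ for some $c\in[-\infty,\infty]$ with $y_c\in\argmax_y\{\hat{F}(y)/U_c(y)\}$.

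The main obstacle I anticipate lies in handling the degenerate boundary cases $c=\pm\infty$, where $U_c$ collapses to a single exponential and $Y$ loses positive recurrence under the associated worst-case measure; here the almost-sure finiteness of $\tau^\ast$ cannot be read off a stationary density and has to be replaced by a direct Laplace-transform computation of the first passage time of a Brownian motion with constant drift. A secondary technical issue is ensuring that the integrability and growth conditions on $\hat{F}$ needed to apply \cite[Theorem~1]{Chr13} are compatible with the exponential growth of $U_c$, so that the limit step in the upper-bound display and the identification of the optimal stopping rule both remain valid.
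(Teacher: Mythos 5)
Your proposal is correct and follows exactly the route the paper takes: reduce to the one-dimensional process $Y_t=\mathbf{a}^T\mathbf{X}_t$, verify that the family $\{U_c\}$ furnishes the required $\mathcal{P}^\kappa$-supermartingales (with martingale property under the worst-case prior), and then invoke Theorem~1 of \cite{Chr13}. The paper's own proof is simply the citation to that result; your sketch of the supermartingale upper bound, the positive-recurrence/compactness argument for attainment and the lower bound, and the boundary cases $c=\pm\infty$ is a faithful unpacking of what that cited theorem delivers rather than a different method.
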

\begin{proof}
The alleged claims are direct implications of Theorem 1 in \cite{Chr13}.
\end{proof}
Theorem \ref{thm1} extends the findings of Theorem 1 in \cite{Chr13} to the present case. The main reason for the validity of this extension is naturally the fact the even though the process $\mathbf{X}_t$ is multidimensional, the process $\mathbf{a}^T\mathbf{X}_t$ is not and we can, therefore,
analyze the problem in terms of the one-dimensional characteristics. The representation \eqref{program} is naturally useful in the determination of the value and the associated worst case prior since it essentially reduces the analysis of the original problem into the analysis of a ratio with known properties without having to invoke strong smoothness or  regularity conditions. In order to illustrate the usefulness of the finding of Theorem \ref{thm1} we now consider an interesting class of exercise payoffs resulting into an explicitly solvable symmetric setting within this class of models. Our main findings on these problems are summarized in the following.
\begin{theorem}\label{even}
Assume that the exercise payoff $\hat{F}(x)$ is even, that is, that $\hat{F}(x)=\hat{F}(-x)$ for all $x\geq 0$. Then, the ratio $\hat{F}(x)/U_0(x)$ is even as well and if there exists a unique threshold
\begin{align*}
x^\ast =\argmax_{x>0}\left\{\frac{\hat{F}(x)}{U_0(x)}\right\},
\end{align*}
so that $\hat{F}(x)/U_0(x)$ is increasing on $(0,x^\ast)$ and decreasing on $(x^\ast,\infty)$, then the value of the optimal stopping policy $\inf\{t\geq 0:\mathbf{a}^T\mathbf{X}_t\not\in (-x^\ast,x^\ast)\}$ reads as
\begin{align}\label{value}
V_\kappa(\mathbf{x})=\begin{cases}
\hat{F}(\mathbf{a}^T\mathbf{x}),&\mathbf{a}^T\mathbf{x}\not\in (-x^\ast,x^\ast),\\
\frac{\hat{F}(x^\ast)}{U_0(x^\ast)}U_0(\mathbf{a}^T\mathbf{x}),&\mathbf{a}^T\mathbf{x}\in (-x^\ast,x^\ast).
\end{cases}
\end{align}
Moreover, the optimal density generator resulting into the worst case measure is
\[{\bm\theta}_t^*=\kappa\sgn(\mathbf{a}^T\mathbf{X}_t) \frac{{\bm a}}{\|{\bm a}\|}.\]
\end{theorem}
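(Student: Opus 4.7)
The argument has three stages. First, I would verify that $U_0$ is itself an even function. Substituting $\hat\psi_\kappa=-\varphi_\kappa$ and $\hat\varphi_\kappa=-\psi_\kappa$ into the definitions of $h_{1,0}$ and $h_{2,0}$ reveals the symmetry $h_{2,0}(y)=h_{1,0}(-y)$, whence $U_0(-y)=\max(h_{1,0}(-y),h_{2,0}(-y))=\max(h_{2,0}(y),h_{1,0}(y))=U_0(y)$. Together with the evenness of $\hat F$, this immediately gives that $\hat F/U_0$ is even. The hypothesised positive maximiser $x^*$ is therefore matched by an equal maximum at $-x^*$, and the unimodality assumption on $(0,\infty)$, extended by symmetry to the whole real line, yields the global majorisation $\lambda^*U_0(y)\geq \hat F(y)$ for every $y\in\mathbb R$, where $\lambda^*:=\hat F(x^*)/U_0(x^*)$. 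Consequently, the pair $(\lambda^*,0)$ is feasible in the program \eqref{program} and Theorem~\ref{thm1}(A) delivers the uniform upper bound $V_\kappa(\mathbf{x})\leq \lambda^*U_0(\mathbf{a}^T\mathbf{x})$ for all $\mathbf{x}\in\mathbb R^d$.

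Second, I would establish a matching lower bound on the slab $\{\mathbf{a}^T\mathbf{x}\in(-x^*,x^*)\}$ by evaluating the candidate stopping time $\tau^*:=\inf\{t\geq 0:\mathbf{a}^T\mathbf{X}_t\notin(-x^*,x^*)\}$. The submartingale computation preceding Theorem~\ref{thm1}, specialised to $c=0$, produces $\mathbb{E}^{\mathbb{Q}^{\bm\theta}}_{\mathbf{x}}[e^{-r\tau^*}U_0(\mathbf{a}^T\mathbf{X}_{\tau^*})]\geq U_0(\mathbf{a}^T\mathbf{x})$ for every admissible $\bm\theta$, with equality under $\bm\theta^*$ thanks to the martingale property established there. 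Because $|\mathbf{a}^T\mathbf{X}_{\tau^*}|=x^*$, evenness of $\hat F$ and $U_0$ gives the pathwise identity $\hat F(\mathbf{a}^T\mathbf{X}_{\tau^*})=\lambda^*U_0(\mathbf{a}^T\mathbf{X}_{\tau^*})$, so that
\[
\mathbb{E}^{\mathbb{Q}^{\bm\theta}}_{\mathbf{x}}\!\left[e^{-r\tau^*}\hat F(\mathbf{a}^T\mathbf{X}_{\tau^*})\right]=\lambda^*\,\mathbb{E}^{\mathbb{Q}^{\bm\theta}}_{\mathbf{x}}\!\left[e^{-r\tau^*}U_0(\mathbf{a}^T\mathbf{X}_{\tau^*})\right]\geq \lambda^*U_0(\mathbf{a}^T\mathbf{x}),
\]
with equality under $\bm\theta^*$. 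Taking the infimum over $\bm\theta$ and the supremum over $\tau$ therefore matches the upper bound inside the slab and simultaneously identifies $\bm\theta^*_t=\kappa\sgn(\mathbf{a}^T\mathbf{X}_t)\bm{a}/\|\bm{a}\|$ as the worst-case density generator.

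Third, for $\mathbf{x}$ with $|\mathbf{a}^T\mathbf{x}|\geq x^*$ the stopping time $\tau^*$ vanishes pathwise, producing the trivial lower bound $V_\kappa(\mathbf{x})\geq\hat F(\mathbf{a}^T\mathbf{x})$. Since $\lambda^*U_0(y)>\hat F(y)$ strictly for $|y|>x^*$, the upper bound from Theorem~\ref{thm1}(A) with $c=0$ is not tight outside the slab, and closing this gap is the main technical obstacle. I would resolve it through Theorem~\ref{thm1}(B): it suffices to exhibit, for each $y_0$ with $|y_0|\geq x^*$, a reference point $c=c(y_0)$ such that $y_0\in\argmax\{\hat F(y)/U_c(y)\}$. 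Exploiting the shift structure of the family $\{U_c\}$, one slides $c$ so that the log-derivative identity $U_c'(y_0)/U_c(y_0)=\hat F'(y_0)/\hat F(y_0)$ holds at $y_0$, and the single-peak hypothesis on $\hat F/U_0$ transported through this one-parameter family ensures the resulting critical point is a global maximum of $\hat F/U_c$. This verification collapses the infimum in \eqref{program} to $\hat F(\mathbf{a}^T\mathbf{x})$ on the exterior region, yielding the second branch of \eqref{value} and completing the proof.
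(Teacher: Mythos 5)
Your Stage 1 and Stage 2 agree with the paper: the evenness of $U_0$ follows from $\hat{\psi}_\kappa=-\varphi_\kappa$, $\hat{\varphi}_\kappa=-\psi_\kappa$, and the lower bound on the slab $\{|\mathbf{a}^T\mathbf{x}|<x^\ast\}$ is obtained exactly as in the paper by plugging $\tau^\ast$ into the inf--sup and using that $\mathbf{a}^T\mathbf{X}_{\tau^\ast}\in\{-x^\ast,x^\ast\}$ together with the $\mathbb{Q}^{\bm\theta^\ast}$-martingale property of $e^{-rt}U_0(\mathbf{a}^T\mathbf{X}_t)$ (plus the a.s.\ finiteness of $\tau^\ast$ coming from positive recurrence, which you should state but is implicit in your sketch).

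The genuine gap is in Stage 3, precisely where you flag the ``main technical obstacle.'' Having only the feasible pair $(\lambda^\ast,c=0)$, you correctly observe that Theorem~\ref{thm1}(A) gives $V_\kappa\leq\lambda^\ast U_0(\mathbf{a}^T\mathbf{x})$, which is strictly larger than $\hat{F}(\mathbf{a}^T\mathbf{x})$ outside the slab. Your proposed repair --- for each $y_0$ with $|y_0|\geq x^\ast$, slide $c$ until $U_c'(y_0)/U_c(y_0)=\hat{F}'(y_0)/\hat{F}(y_0)$ and then appeal to ``the single-peak hypothesis transported through this one-parameter family'' --- is asserted but not proved, and it does not follow from the theorem's hypotheses. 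The hypotheses concern unimodality of $\hat{F}/U_0$ only; nothing is assumed about $\hat{F}/U_c$ for $c\neq 0$, and even granting the shift identity $U_c(y)=U_0(y-c)$ one still has to show (a) that a solution $c(y_0)$ of the tangency equation exists (which presupposes differentiability of $\hat{F}$, not assumed), and (b) that the resulting critical point of $\hat{F}/U_c$ is a \emph{global} maximizer, which is exactly the nontrivial content of Theorem~\ref{thm1}(B) and does not transfer automatically from the $c=0$ case.

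The paper avoids this entirely by a different route for the upper bound: since $\mathbb{Q}^{\bm\theta^\ast}\in\mathcal{P}^\kappa$, the inf over priors is dominated by the evaluation at $\bm\theta^\ast$, giving $V_\kappa(\mathbf{x})\leq\sup_{\tau}\mathbb{E}^{\mathbb{Q}^{\bm\theta^\ast}}_{\mathbf{x}}\bigl[e^{-r\tau}\hat{F}(\mathbf{a}^T\mathbf{X}_\tau)\mathbbm{1}_{\{\tau<\infty\}}\bigr]$. Under the \emph{fixed} worst-case measure $\mathbb{Q}^{\bm\theta^\ast}$, the projected process $Y_t=\mathbf{a}^T\mathbf{X}_t$ is a one-dimensional positively recurrent diffusion for which $e^{-rt}U_0(Y_t)$ is a martingale, and the unimodality of $\hat{F}/U_0$ is precisely the hypothesis of Theorem~4 of \cite{BeLe1997}. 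That theorem then identifies the right-hand side with $\hat{V}_\kappa(\mathbf{x})$ \emph{on all of $\mathbb{R}^d$ simultaneously}, including the exterior region. You should replace your Stage 3 with this single application of the Beibel--Lerche characterization under $\mathbb{Q}^{\bm\theta^\ast}$; it makes your Stage 1 upper bound redundant and closes the gap.
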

\begin{proof}
We first observe utilizing the identities $\hat{\varphi}_\kappa = -\psi_\kappa$ and
$\hat{\psi}_\kappa=-\varphi_\kappa$ that $U_0(x)=U_0(-x)$ for all $x\geq 0$. Consequently, we notice that the ratio $\hat{F}(x)/U_0(x)$
is even as claimed. Assume now that there exists a unique maximizer $x^\ast >0$ of the ratio $\hat{F}(x)/U_0(x)$ so that $\hat{F}(x)/U_0(x)$ is increasing on $(0,x^\ast)$ and decreasing on $(x^\ast,\infty)$.

Denote now by $\tau^\ast=\inf\{t\geq 0: \mathbf{a}^T \mathbf{X}_t\not\in (-x^\ast,x^\ast)\}$ the first exit time of the process $\mathbf{a}^T\mathbf{X}_t$ from the set $(-x^\ast,x^\ast)$ and by $\hat{V}_\kappa(\mathbf{x})$ the proposed value function \eqref{value}. It is clear that since $\mathbb{Q}^{{\bm \theta}^\ast}\in\mathcal{P}^\kappa$ we have for any admissible stopping time $\tau\in \mathcal{T}$ that
\begin{align*}
\inf_{\mathbb{Q}^{{\bm \theta}}\in \mathcal{P}^\kappa}\mathbb{E}_{\mathbf{x}}^{\mathbb{Q}^{{\bm \theta}}}\left[e^{-r\tau}\hat{F}(\mathbf{a}^T\mathbf{X}_{\tau})\mathbbm{1}_{\{\tau<\infty\}}\right]\leq \mathbb{E}_{\mathbf{x}}^{\mathbb{Q}^{{\bm \theta}^\ast}}\left[e^{-r\tau}\hat{F}(\mathbf{a}^T\mathbf{X}_{\tau})\mathbbm{1}_{\{\tau<\infty\}}\right].
\end{align*}
Consequently, we find that
$$
V_\kappa(\mathbf{x})\leq \sup_{\tau\in \mathcal{T}}\mathbb{E}_{\mathbf{x}}^{\mathbb{Q}^{{\bm \theta}^\ast}}\left[e^{-r\tau}\hat{F}(\mathbf{a}^T\mathbf{X}_{\tau})\mathbbm{1}_{\{\tau<\infty\}}\right].
$$
Consider now the process
$$
\mathcal{M}_t=e^{-rt}U_0(\mathbf{a}^T\mathbf{X}_{t}).
$$
As was shown earlier, $\mathcal{M}_t$ is a positive $\mathbb{Q}^{{\bm \theta}^\ast}$-martingale. Moreover, since the process characterized by the SDE
$$
dY_t=-\kappa\|\mathbf{a}\|\sgn(Y_t)dt+\mathbf{a}^Td\mathbf{W}_t^{{\bm \theta}^\ast},\quad Y_0=\mathbf{a}^T\mathbf{x},
$$
is positively recurrent we know that the first exit time $\tau^\ast=\inf\{t\geq 0: Y_t\not\in (-x^\ast,x^\ast)\}=\inf\{t\geq 0: \mathbf{a}^T \mathbf{X}_t\not\in (-x^\ast,x^\ast)\}$ is $\mathbb{Q}^{{\bm \theta}^\ast}$-almost surely finite. Consequently, the assumed maximality of the ratio $\hat{F}(x^\ast)/U_0(x^\ast)=\hat{F}(-x^\ast)/U_0(-x^\ast)$ guarantees that Theorem 4 of \cite{BeLe1997} applies and we find that (see also \cite{LeUr2007}, \cite{ChIr11}, and \cite{GaLe2011})
$$
\hat{V}_\kappa(\mathbf{x})=\sup_{\tau\in \mathcal{T}}\mathbb{E}_{\mathbf{x}}^{\mathbb{Q}^{{\bm \theta}^\ast}}\left[e^{-r\tau}\hat{F}(\mathbf{a}^T\mathbf{X}_{\tau})\mathbbm{1}_{\{\tau<\infty\}}\right]
$$
proving that $V_\kappa(\mathbf{x})\leq \hat{V}_\kappa(\mathbf{x})$ for all $\mathbf{x}\in \mathbb{R}^d$.
In order to reverse this inequality we first observe that if $\mathbf{x}\in\{\mathbf{x}\in\mathbb{R}^d:\mathbf{a}^T\mathbf{x}\in (-x^\ast,x^\ast)\}$ then we naturally have that
\begin{align*}
V_\kappa(\mathbf{x})&\geq \inf_{\mathbb{Q}^{{\bm \theta}}\in \mathcal{P}^\kappa}\mathbb{E}_{\mathbf{x}}^{\mathbb{Q}^{{\bm \theta}}}\left[e^{-r\tau^\ast}\frac{\hat{F}(\mathbf{a}^T\mathbf{X}_{\tau^\ast})}{U_0(\mathbf{a}^T\mathbf{X}_{\tau^\ast})}
U_0(\mathbf{a}^T\mathbf{X}_{\tau^\ast})\mathbbm{1}_{\{\tau^\ast<\infty\}}\right]\\
&\geq\left(\frac{\hat{F}(-x^\ast)}{U_0(-x^\ast)}\wedge\frac{\hat{F}(x^\ast)}{U_0(x^\ast)}\right)\inf_{\mathbb{Q}^{{\bm \theta}}\in \mathcal{P}^\kappa}\mathbb{E}_{\mathbf{x}}^{\mathbb{Q}^{{\bm \theta}}}\left[e^{-r\tau^\ast}U_0(\mathbf{a}^T\mathbf{X}_{\tau^\ast})\mathbbm{1}_{\{\tau^\ast<\infty\}}\right]\\
&=\frac{\hat{F}(x^\ast)}{U_0(x^\ast)}\mathbb{E}_{\mathbf{x}}^{\mathbb{Q}^{{\bm \theta}^\ast}}\left[e^{-r\tau^\ast}U_0(\mathbf{a}^T\mathbf{X}_{\tau^\ast})\mathbbm{1}_{\{\tau^\ast<\infty\}}\right]=\frac{\hat{F}(x^\ast)}{U_0(x^\ast)}U_0(\mathbf{a}^T\mathbf{x})=\hat{V}_\kappa(\mathbf{x})
\end{align*}
proving that $\hat{V}_\kappa(\mathbf{x})=V_\kappa(\mathbf{x})$ for all  $\mathbf{x}\in\{\mathbf{x}\in\mathbb{R}^d:\mathbf{a}^T\mathbf{x}\in (-x^\ast,x^\ast)\}$ and that $V_\kappa(\mathbf{x})=\hat{F}(\mathbf{a}^T\mathbf{x})$ for $\mathbf{x}\in\{\mathbf{x}\in\mathbb{R}^d:\mathbf{a}^T\mathbf{x}=-x^\ast \textrm{or }\mathbf{a}^T\mathbf{x}=x^\ast\}$. Finally, if $\mathbf{x}\in\{\mathbf{x}\in\mathbb{R}^d:\mathbf{a}^T\mathbf{x}\not\in (-x^\ast,x^\ast)\}$, then $\tau^\ast=0$ $\mathbb{Q}^{{\bm \theta}}$-almost surely and
\begin{align*}
V_\kappa(\mathbf{x})&\geq \inf_{\mathbb{Q}^{{\bm \theta}}\in \mathcal{P}^\kappa}\mathbb{E}_{\mathbf{x}}^{\mathbb{Q}^{{\bm \theta}}}\left[e^{-r\tau^\ast}\hat{F}(\mathbf{a}^T\mathbf{X}_{\tau^\ast})\mathbbm{1}_{\{\tau^\ast<\infty\}}\right]\\
&= \hat{F}(\mathbf{a}^T\mathbf{x}) =\hat{V}_\kappa(\mathbf{x})
\end{align*}
completing the proof of our theorem.
\end{proof}

\begin{rmk}
It is worth pointing out that the positive homogeneity of degree $-1$ of the constants $\psi_\kappa,\varphi_\kappa,\hat{\psi}_\kappa,\hat{\varphi}_\kappa$ as functions of the parameter vector $\mathbf{a}$ guarantees that the function $U_c$ remains unchanged for parameter vectors of equal Euclidean length, that is, for vectors satisfying the condition $\|\mathbf{a}_1\|=\|\mathbf{a}_2\|$. Consequently, solving the stopping problem with respect one $\mathbf{a}_1 \in \mathbb{R}^d$ results into an optimal policy and value for an entire class of problems constrained by the requirement that $\|\mathbf{a}_1\|=\|\mathbf{a}_2\|$.\\ It is furthermore interesting to note that already in dimension $d=1$ the underlying process under the worst case measure is a Brownian motion with \emph{broken drift} as studied in \cite{ernesto_paavo_broken_drift}. Therefore, in the class of problems studied in this paper, optimal stopping problems with broken drift naturally arise. Here, however, the breaking point always lies in the continuation set.
\end{rmk}
Theorem \ref{even} characterizes the optimal timing policy in the symmetric case where the exercise payoff is even and the ratio $\hat{F}(y)/U_0(y)$ attains a unique global maximum on $\mathbb{R}_+$ (and by symmetry also on $\mathbb{R}_-$). The findings of Theorem \ref{even} clearly indicate that in the present setting symmetry is useful in the characterization of the value and the worst case measure. To see that this is indeed the case, we now present a general observation valid for symmetric periodic payoffs.
\begin{theorem}\label{periodicpayoff}
Assume that the exercise payoff $\hat{F}(x)$ satisfies the following conditions
\begin{itemize}
  \item[(A)] The function $\hat{F}(x)$ is periodic with period length $P>0$;
  \item[(B)] There exists a threshold $x_1\in \mathbb{R}$ so that $\hat{F}(x_1)\geq \hat{F}(x) \geq \hat{F}(x_0)$, where $x_0=x_1-P/2$, for all $x\in \mathbb{R}$;
  \item[(C)] The function $\hat{F}(x)$ satisfies the symmetry condition $\hat{F}(x_0-x)=\hat{F}(x_0+x)$ for all $x\in[0,P/2]$.
\end{itemize}
Assume also that there exists a unique interior threshold
\begin{align*}
x^\ast =\argmax_{x\in[x_0,x_1]}\left\{\frac{\hat{F}(x)}{U_{x_0}(x)}\right\},
\end{align*}
so that $\hat{F}(x)/U_{x_0}(x)$ is increasing on $(x_0,x^\ast)$ and decreasing on $(x^\ast,x_1)$. Then, the value of the optimal stopping policy $\inf\{t\geq 0:\mathbf{a}^T\mathbf{X}_t\not\in \cup_{n\in \mathbb{Z}}(y_n^\ast,z_n^\ast)\}$ reads as
\begin{align}\label{value}
V_\kappa(\mathbf{x})=\begin{cases}
\hat{F}(\mathbf{a}^T\mathbf{x}),&\mathbf{a}^T\mathbf{x}\not\in \cup_{n\in \mathbb{Z}}(y_n^\ast,z_n^\ast),\\
\frac{\hat{F}(x^\ast)}{U_{x_0}(x^\ast)}U_{x_0}(\mathbf{a}^T\mathbf{x}),&\mathbf{a}^T\mathbf{x}\in \cup_{n\in \mathbb{Z}}(y_n^\ast,z_n^\ast),
\end{cases}
\end{align}
where $y_n^\ast=2x_0-x^\ast + nP$ and $z_n^\ast=x^\ast + n P$, $n\in \mathbb{Z}$.
Moreover, the optimal density generator resulting into the worst case measure is
$$
{\bm\theta}_t^\ast=\begin{cases}
-\kappa\frac{{\bm a}}{\|{\bm a}\|}, & \mathbf{a}^T\mathbf{X}_t\in\cup_{n\in \mathbb{Z}}[x_0+nP,x_1+nP]\\
\kappa\frac{{\bm a}}{\|{\bm a}\|}, & \mathbf{a}^T\mathbf{X}_t\in\cup_{n\in \mathbb{Z}}[x_1+nP,x_0+(n+1)P]
\end{cases}
$$
\end{theorem}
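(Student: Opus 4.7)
The plan is to lift Theorem \ref{even} cell by cell, exploiting condition (C) to reduce each period of $\hat{F}$ to an even payoff centered at $x_0+nP$. On the $n$-th continuation component $(y_n^\ast,z_n^\ast)$ the appropriate excessive function is the translate $U_{x_0+nP}$, which is smooth and satisfies $U_{x_0+nP}(y_n^\ast)=U_{x_0+nP}(z_n^\ast)=U_{x_0}(x^\ast)$ by the symmetry of $U_c$ about $c$; periodicity together with (C) gives $\hat{F}(y_n^\ast)=\hat{F}(z_n^\ast)=\hat{F}(x^\ast)$, so the piecewise formula is continuous across the cell boundaries and coincides there with $\hat{F}$. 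The proof then mirrors the two-part (upper and lower bound) argument of Theorem \ref{even}, applied component by component.

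For the upper bound, passing to the worst-case measure immediately yields $V_\kappa(\mathbf{x}) \leq \sup_{\tau}\mathbb{E}^{\mathbb{Q}^{{\bm \theta}^\ast}}_{\mathbf{x}}[e^{-r\tau}\hat{F}(\mathbf{a}^T\mathbf{X}_\tau)\mathbbm{1}_{\{\tau<\infty\}}]$. Under $\mathbb{Q}^{{\bm \theta}^\ast}$ the scalar projection $Y_t=\mathbf{a}^T\mathbf{X}_t$ is a Brownian motion of quadratic variation $\|\mathbf{a}\|^2 t$ with piecewise constant drift of magnitude $\kappa\|\mathbf{a}\|$ switching sign at each $x_1+nP$; this produces a $P$-periodic extension of the Laplace-type ergodic structure of Theorem \ref{even} and in particular positive recurrence inside every cell, guaranteeing that $\tau^\ast$ is $\mathbb{Q}^{{\bm \theta}^\ast}$-a.s.\ finite. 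Exactly as before, $\{e^{-rt}U_{x_0+nP}(Y_t)\}$ is a positive $\mathbb{Q}^{{\bm \theta}^\ast}$-martingale on the $n$-th cell by the It{\^o}-D\"oblin identity, and the assumed monotonicity of $\hat{F}/U_{x_0+nP}$ on $(x_0+nP,x_1+nP)$ (inherited from the assumption on $(x_0,x_1)$ via the translation $y\mapsto y+nP$ combined with periodicity of $\hat{F}$) allows Theorem 4 of \cite{BeLe1997} to identify $\tau^\ast$ as the maximizer with value $\hat{V}_\kappa$.

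For the reverse inequality, the case $\mathbf{a}^T\mathbf{x}\notin\cup_n(y_n^\ast,z_n^\ast)$ is trivial via $\tau=0$. If $\mathbf{a}^T\mathbf{x}\in(y_n^\ast,z_n^\ast)$, the optimal rule stops at the first exit from this single component, so only intra-cell dynamics matter: the supermartingale property of $\{e^{-r(t\wedge\tau^\ast)}U_{x_0+nP}(Y_{t\wedge\tau^\ast})\}$ under every $\mathbb{Q}^{{\bm \theta}}\in\mathcal{P}^\kappa$ follows from the computation preceding Theorem \ref{thm1} applied with $c=x_0+nP$, and combined with the boundary identity $\hat{F}(Y_{\tau^\ast})=\hat{F}(x^\ast)$ on $\{\tau^\ast<\infty\}$ this gives
\begin{align*}
V_\kappa(\mathbf{x}) &\geq \inf_{\mathbb{Q}^{{\bm \theta}}}\mathbb{E}^{\mathbb{Q}^{{\bm \theta}}}_{\mathbf{x}}\left[e^{-r\tau^\ast}\hat{F}(Y_{\tau^\ast})\mathbbm{1}_{\{\tau^\ast<\infty\}}\right]\\
&= \frac{\hat{F}(x^\ast)}{U_{x_0}(x^\ast)}\,U_{x_0+nP}(\mathbf{a}^T\mathbf{x}) = \hat{V}_\kappa(\mathbf{x}),
\end{align*}
the infimum being attained by $\mathbb{Q}^{{\bm \theta}^\ast}$ where the supermartingale degenerates to a martingale.

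The main obstacle is justifying the cell-by-cell decoupling. Globally, $\hat{V}_\kappa$ is only piecewise defined and the natural periodic extension built from the translates $U_{x_0+nP}$ has concave kinks at the maxima $x_1+nP$, so it is not $C^1$ and one cannot simply invoke the smooth It{\^o} formula for a single global excessive function. The resolution is that the optimal policy stops at the first cell boundary it meets, so the apparent global coupling never bites and both bounds localize to the single-cell problem already handled by Theorem \ref{even}. This reduction, together with the translation invariance in $c$ of the construction of $U_c$, is the essential new ingredient beyond Theorem \ref{even}.
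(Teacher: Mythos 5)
Your proposal follows essentially the same route as the paper's proof, which is quite terse: after establishing the symmetry $U_{x_0}(x_0-x)=U_{x_0}(x_0+x)$ and locating the periodic translates $y_n^\ast, z_n^\ast$ as the relevant ratio maximizers for $U_{x_0+nP}$, the paper simply declares the optimality and worst-case-measure characterization ``identical with the proof of Theorem \ref{even}.'' You fill in what the paper leaves implicit, and your reading of the statement --- that the displayed value should be $\frac{\hat F(x^\ast)}{U_{x_0}(x^\ast)}U_{x_0+nP}(\mathbf a^T\mathbf x)$ on the $n$-th cell (via translation invariance $U_{x_0+nP}(y+nP)=U_{x_0}(y)$) rather than the literal $U_{x_0}$ --- is the correct one, since $U_{x_0}$ is only harmonic under $\mathbb Q^{{\bm\theta}^\ast}$ on $[x_1-P,x_1]$.

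Two small points. First, a terminological slip: in your lower-bound paragraph you invoke ``the supermartingale property of $\{e^{-r(t\wedge\tau^\ast)}U_{x_0+nP}(Y_{t\wedge\tau^\ast})\}$ under every $\mathbb Q^{\bm\theta}\in\mathcal P^\kappa$.'' What the computation preceding Theorem \ref{thm1} actually delivers for generic $\bm\theta$ is the \emph{submartingale}-type bound $\mathbb E^{\mathbb Q^{\bm\theta}}[e^{-rT}U_c(Y_T)]\geq U_c(y)$, with equality precisely at $\bm\theta^\ast=\bm\theta_c$; the supermartingale (indeed nonnegative local martingale) property holds only under $\mathbb Q^{{\bm\theta}^\ast}$. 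The inequality you write afterwards goes in the correct direction, so this is a naming error rather than a logical one, but worth fixing since the opposite sign would break the argument. Second, the justification that ``the apparent global coupling never bites'' could be sharpened: the reason one may restrict to stopping times that never leave the enclosing cell $(x_1+(n-1)P,\,x_1+nP)$ is condition (B). Since $\hat F$ attains its global maximum at $x_1+nP$ and the discount factor is strictly decreasing, any $\tau$ is dominated by $\tau\wedge\sigma_n$, where $\sigma_n$ is the first hitting time of $\{x_1+(n-1)P,\,x_1+nP\}$; only then does the single-cell Beibel--Lerche argument (with $U_{x_0+nP}$) apply verbatim. This is precisely where (B), unused elsewhere in your sketch, enters.
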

\begin{proof}
The assumed periodicity and symmetry of the exercise payoff $\hat{F}$ implies that we can focus on the behavior of the ratio
$\hat{F}(y)/U_{x_0}(y)$
on $[x_1-P,x_1]$ (from a maximum to the next). It is clear that since $\hat{\psi}_{\kappa}=-\varphi_\kappa$ and $\hat{\varphi}_{\kappa}=-\psi_\kappa$ we have
\begin{align*}
U_{x_0}(x_0-x)&= \frac{\hat{\psi}_{\kappa}}{\hat{\psi}_{\kappa}-\hat{\varphi}_{\kappa}}e^{-\hat{\varphi}_{\kappa} x}-\frac{\hat{\varphi}_{\kappa}}{\hat{\psi}_{\kappa}-\hat{\varphi}_{\kappa}}e^{-\hat{\psi}_{\kappa} x}\\
&= \frac{\psi_{\kappa}}{\psi_{\kappa}-\varphi_{\kappa}}e^{\varphi_{\kappa} x}-\frac{\varphi_{\kappa}}{\psi_{\kappa}-\varphi_{\kappa}}e^{\psi_{\kappa} x} = U_{x_0}(x_0+x)
\end{align*}
for $x\in[0,P/2]$.
Consequently, assumption (C) guarantees that
$$
\frac{\hat{F}(x_0+x)}{U_{x_0}(x_0+x)}=\frac{\hat{F}(x_0-x)}{U_{x_0}(x_0-x)}
$$
for all $x\in[0,P/2]$. On the other hand, our assumption on the existence of an interior maximizing threshold $x^\ast$ and the symmetry of $\hat{F}$ guarantees that
$$
2x_0-x^\ast=\argmax_{x\in[x_1-P,x_0]}\left\{\frac{\hat{F}(x)}{U_{x_0}(x)}\right\}
$$
and
$$
\frac{\hat{F}(x^\ast)}{U_{x_0}(x^\ast)}=\frac{\hat{F}(2x_0-x^\ast)}{U_{x_0}(2x_0-x^\ast)}.
$$
Combining this result with the assumed periodicity of the payoff then shows that
\begin{align*}
z_n^\ast&=x^\ast + n P = \argmax_{x\in[x_0+nP,x_1+nP]}\left\{\frac{\hat{F}(x)}{U_{x_0+nP}(x)}\right\}\\
y_n^\ast&=2x_0-x^\ast + n P = \argmax_{x\in[x_1+(n-1)P,x_0+nP]}\left\{\frac{\hat{F}(x)}{U_{x_0+nP}(x)}\right\}.
\end{align*}
The alleged optimality and characterization of the optimal density generator is now identical with the proof of our Theorem \ref{even}.
\end{proof}

\subsection{Discontinuous Asymmetric Digital Option}

In order to illustrate our general findings, we now focus on the discontinuous asymmetric digital option case, where $\hat{F}(x)=(k_2 x + k_3)\mathbbm{1}_{\{x\geq0\}}-k_1 x\mathbbm{1}_{\{x<0\}}$, where $k_1,k_2,k_3\in \mathbb{R}_+$ are known positive constants. In the present setting it suffices to investigate the behavior of the functions
$
\Pi_1(x)=(k_2 x + k_3)/h_{1c}(x)
$
and
$
\Pi_2(x)=-k_1 x/h_{2c}(x).
$
Standard differentiation yields $\Pi_1'(x)=f_1(x)/h_{1c}^2(x)$ and $\Pi_2'(x)=k_1f_2(x)/h_{2c}^2(x)$, where
\begin{align*}
f_1(x) &=k_2 h_{1c}(x) - h_{1c}'(x)(k_2 x + k_3)\\
f_2(x) &= h_{2c}'(x)x- h_{2c}(x).
\end{align*}
Since $f_1(c)=k_2>0,f_2(c)=-1<0$
\begin{align*}
f_1'(x) &= - h_{1c}''(x)(k_2 x + k_3)\\
f_2'(x) &= h_{2c}''(x)x
\end{align*}
$\lim_{x\rightarrow\infty}f_1(x)=-\infty$, and $\lim_{x\rightarrow-\infty}f_2(x)=\infty$ we notice that there exists two thresholds $x_1^\ast(c)>c\vee -k_3/k_2$ and $x_2^\ast(c)<c\wedge 0$ so that the first order conditions
$f_1(x_1^\ast(c))=0, f_2(x_2^\ast(c))=0$ are satisfied. Moreover, the thresholds $x_1^\ast(c), x_2^\ast(c)$ are increasing as functions of the reference point $c$ and satisfy the limiting conditions $\lim_{c\rightarrow-\infty}x_1^\ast(c)=-k_3/k_2+1/\psi_\kappa,\lim_{c\rightarrow-\infty}x_2^\ast(c)=-\infty$, $\lim_{c\rightarrow\infty}x_1^\ast(c)=\infty,$ and $\lim_{c\rightarrow\infty}x_2^\ast(c)=1/\hat{\varphi}_\kappa$.
Thus, we notice by utilizing our results above that $\lim_{c\rightarrow-\infty}\Pi_1(x_1^\ast(c))=0,\lim_{c\rightarrow\infty}\Pi_1(x_1^\ast(c))=\infty,$
$\lim_{c\rightarrow-\infty}\Pi_2(x_2^\ast(c))=\infty$, and $\lim_{c\rightarrow\infty}\Pi_2(x_2^\ast(c))=0$. Consequently, we notice that there is a unique $\hat{c}$ such that $\Pi_1(x_1^\ast(\hat{c}))=\Pi_2(x_2^\ast(\hat{c}))$ is met. Two cases arise. If $x_1^\ast(\hat{c})\geq 0$, then $c^\ast=\hat{c}$ is the optimal state at which the density generator switches from one extreme to another and the value of the optimal policy reads as
$$
V_\kappa(\mathbf{x})=\begin{cases}
k_2 \mathbf{a}^T\mathbf{x} + k_3,&\mathbf{a}^T\mathbf{x}\geq x_1^\ast(c^\ast),\\
\Pi_1(x_1^\ast(c^\ast))U_{c^\ast}(\mathbf{a}^T\mathbf{x}),&x_2^\ast(c^\ast) < \mathbf{a}^T\mathbf{x}< x_1^\ast(c^\ast),\\
-k_1 \mathbf{a}^T\mathbf{x},&\mathbf{a}^T\mathbf{x}\leq x_2^\ast(c^\ast).
\end{cases}
$$
Especially, the value satisfies the smooth-fit condition at the optimal boundaries $x_1^\ast(c^\ast)$ and $x_2^\ast(c^\ast)$. This case is illustrated in Figure \ref{multiBM} under the assumptions that $\|\mathbf{a}\| = 0.1, r = 0.02, k_1=1, k_2 = 0.5$, and $k_3 = 0.35$ (implying that $c^\ast=-0.0941818, x_2^\ast=-0.616587$, and $x_1^\ast=0.205943$)
\begin{figure}[!ht]
\begin{center}
\includegraphics[width=0.6\textwidth]{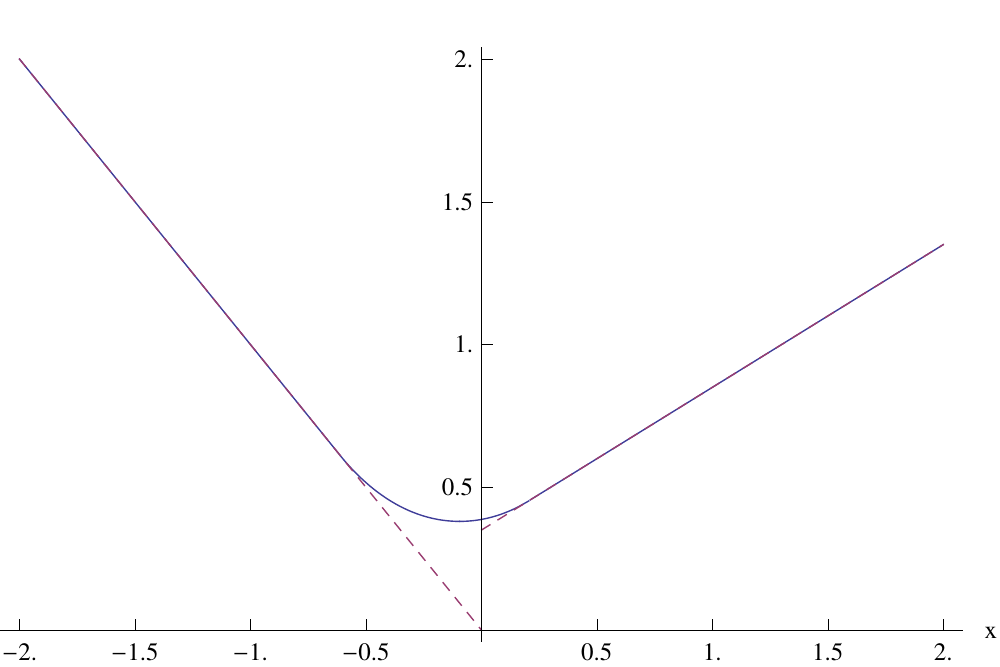}
\caption{\small The value function (uniform) and exercise payoff (dashed)}\label{multiBM}
\end{center}
\end{figure}

However, if $x_1^\ast(\hat{c})< 0$ then the situation changes since in that case $0$ becomes an optimal stopping boundary at which the value coincides with the payoff in a nondifferentiable way. In that case the value reads as
$$
V_\kappa(\mathbf{x})=\begin{cases}
k_2 \mathbf{a}^T\mathbf{x} + k_3,&\mathbf{a}^T\mathbf{x}\geq 0,\\
\Pi_2(x_2^\ast(c^\ast))U_{c^\ast}(\mathbf{a}^T\mathbf{x}),&x_2^\ast(c^\ast) < \mathbf{a}^T\mathbf{x}< 0,\\
-k_1 \mathbf{a}^T\mathbf{x},&\mathbf{a}^T\mathbf{x}\leq x_2^\ast(c^\ast),
\end{cases}
$$
where the optimal boundary and the critical switching state are the unique roots of the equations
\begin{align*}
h_{2c^{\ast}}'(x_2^\ast(c^\ast))x_2^\ast(c^\ast)&=h_{2c^\ast}(x_2^\ast(c^\ast))\\
-\frac{k_1 x_2^\ast(c^\ast)}{h_{2c^\ast}(x_2^\ast(c^\ast))} &= \frac{k_3}{h_{1c^\ast}(0)}.
\end{align*}
This case is illustrated in Figure \ref{multiBM2} under the assumptions that $\|\mathbf{a}\| = 0.1, r = 0.02, k_1=1, k_2 = 0.5$, and $k_3 = 0.7$ (implying that $c^\ast=-0.348597, x_2^\ast=-0.739769$, and $x_1^\ast=0$)
\begin{figure}[!ht]
\begin{center}
\includegraphics[width=0.6\textwidth]{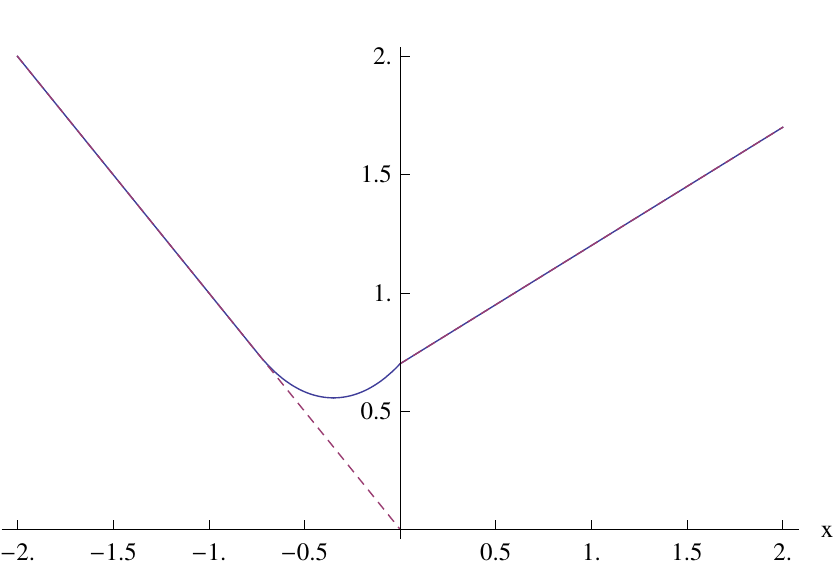}
\caption{\small The value function (uniform) and exercise payoff (dashed)}\label{multiBM2}
\end{center}
\end{figure}

It is at this point worth pointing out that in the case where $k_3=0$ and $k_1=k_2$ the exercise payoff is continuous and even and the findings of Theorem \ref{even} applies.
In that case, the optimal boundaries can be solved from the optimality condition $\psi_\kappa  e^{\psi_\kappa x^\ast}(1-\varphi_\kappa x^\ast)=\varphi_\kappa e^{\varphi_\kappa x^\ast}(1-\psi_\kappa x^\ast)$.

\subsection{Periodic and Even Payoff}

In order to illustrate how the approach applies in the periodic setting resulting into multiple boundaries, consider the periodic payoff $\hat{F}(x)=\cos(x)$. Since the payoff is even, attains its maxima at the points $y_n = 2n\pi$, its minima at the points $x_n = (2n+1)\pi$, and is symmetric on the sets $[2n\pi,2(n+1)\pi]$, $n\in \mathbb{Z}$, we notice that we can extend the findings of Theorem \ref{even} and make an ansatz that the optimal reference point is $c_n^\ast=x_n$. To see that this is indeed the case, we first observe that if $y\in [y_n,x_n]$ then  $\Pi_{x_n}(x_n+y)=\Pi_{x_n}(x_n-y)$, since
$\cos(x_n-y)=\cos(x_n+y)$ and
\begin{align*}
U_{x_n}(x_n-y)&=\frac{\hat{\psi}_{\kappa}}{\hat{\psi}_{\kappa}-\hat{\varphi}_{\kappa}}e^{-\hat{\varphi}_{\kappa} y}-\frac{\hat{\varphi}_{\kappa}}{\hat{\psi}_{\kappa}-\hat{\varphi}_{\kappa}}e^{-\hat{\psi}_{\kappa} y}\\
&=-\frac{\varphi_{\kappa}}{\psi_{\kappa}-\varphi_{\kappa}}e^{\psi_{\kappa} y}+\frac{\psi_{\kappa}}{\psi_{\kappa}-\varphi_{\kappa}}e^{\varphi_{\kappa} y}=U_{x_n}(x_n+y)
\end{align*}
for all $y\in \mathbb{R}$ and $n\in \mathbb{Z}$. Consequently, it is sufficient to investigate the ratio
$\Pi_{x_n}(y)$ on $[x_n,y_{n+1}]$. Standard differentiation yields
$\Pi_{x_n}'(y)=u_n(y)/U_{x_n}^2(y)$, where
$$
u_n(y) = \frac{\varphi_{\kappa}}{\psi_{\kappa}-\varphi_{\kappa}}e^{\psi_{\kappa} (y-x_n)}(\sin(y)+\psi_\kappa \cos(y))-\frac{\psi_{\kappa}}{\psi_{\kappa}-\varphi_{\kappa}}e^{\varphi_{\kappa} (y-x_n)}(\sin(y)+\varphi_\kappa \cos(y)).
$$
Noticing now that $u_n(x_n)=0$, $$u_n(y_{n+1})=\frac{\psi_{\kappa}\varphi_{\kappa}}{\psi_{\kappa}-\varphi_{\kappa}}\left(e^{\psi_{\kappa} \pi}-e^{\varphi_{\kappa} \pi}\right)<0,$$ and
$$
u_n'(y) = \left(\varphi_{\kappa}(\psi_{\kappa}^2+1)e^{\psi_{\kappa} (y-x_n)}-\psi_{\kappa}(\varphi_{\kappa}^2+1)e^{\varphi_{\kappa} (y-x_n)}\right)\frac{\cos(y)}{\psi_{\kappa}-\varphi_{\kappa}}
$$
we notice that equation $u_n(y)=0$ has a unique root $z_n^\ast\in(x_n+\frac{\pi}{2},y_{n+1})$ such that
$$
z_n^\ast=\argmax_{y\in[x_n,y_{n+1}]}\Pi_{x_n}(y).
$$
It is now clear that the value of the optimal stopping policy reads as
$$
V_\kappa(\mathbf{x})=\begin{cases}
\Pi_{x_n}(z_n^\ast) U_{x_n}(\mathbf{a}^T\mathbf{x}),& \mathbf{a}^T\mathbf{x}\in \cup_{n\in \mathbb{Z}}(x_n-z_n^\ast,x_n+z_n^\ast),\\
\cos(\mathbf{a}^T\mathbf{x}),&\mathbf{a}^T\mathbf{x}\not\in \cup_{n\in \mathbb{Z}}(x_n-z_n^\ast,x_n+z_n^\ast).
\end{cases}
$$
This value and the optimal policies are illustrated for $y\in[-2\pi,2\pi]$ in Figure \ref{periodic} under the assumptions that $\kappa = 0.02, r = 0.03,$ and  $\sigma = 0.1$ (implying that the optimal thresholds are $-5.07233,-1.21086,1.21086,5.07233$).
\begin{figure}[!ht]
\begin{center}
\includegraphics[width=0.6\textwidth]{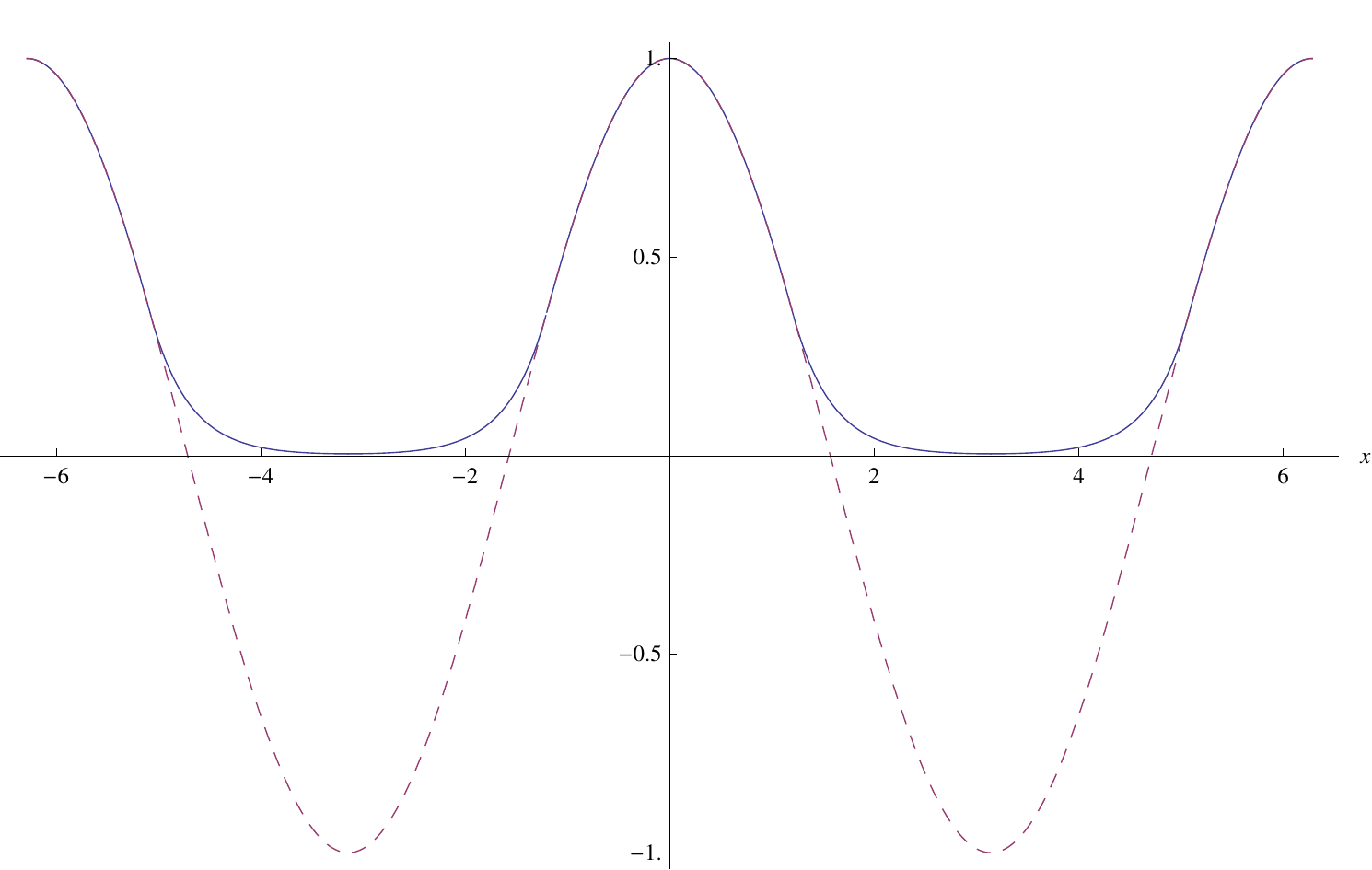}
\caption{\small The value function (uniform) and exercise payoff (dashed) in the periodic case}\label{periodic}
\end{center}
\end{figure}
It is worth noticing that the worst case prior is induced in the present case by the density generator
$$
{\bm\theta}^\ast=\begin{cases}
\kappa\frac{{\bm a}}{\|{\bm a}\|}, &(2n+1)\pi \leq \mathbf{a}^T\mathbf{x} \leq 2(n+1)\pi,\\
-\kappa\frac{{\bm a}}{\|{\bm a}\|}, &2n\pi \leq \mathbf{a}^T\mathbf{x} \leq (2n+1)\pi,
\end{cases}
$$
for all $n\in \mathbb{Z}$. Essentially, the optimal density generator tends to drive the dynamics of the underlying diffusion towards the
minim points $x_n$ of the exercise payoff.\\

\section{Radially Symmetric Payoff}

It is well-known from the literature on linear diffusions that the radial part of a multidimensional Brownian motion constitutes a Bessel process. Our objective is now to exploit this connection by focusing on exercise payoffs which are radially symmetric. More precisely, we now assume that the payoff is of the form
\begin{align}\label{radialpayoff}
F({\bf x})=\hat{F}\left(\|\bm x\|^2\right) = \hat{F}\left(\sum_{i=1}^dx_i^2\right),
\end{align}
where $\hat{F}:\mathbb{R}_+\mapsto \mathbb{R}$ is a known measurable function.
We now make an ansatz and focus on functions which are radially symmetric, that is, on functions of the form
\begin{align*}
u({\bf x})= h\left(\|\bm x\|^2\right) = h\left(\sum_{i=1}^dx_i^2\right),
\end{align*}
where $h:\mathbb{R}_+\mapsto \mathbb{R}_+$ is assumed to be twice continuously differentiable on $\mathbb{R}_+$. In this case, a short calculation yields that the worst case prior becomes
\[{\bm\theta}^*=\kappa\sgn(h'(\|{\bm x}\|^2)) \frac{{\bm x}}{\|{\bm x}\|},\]
so that the worst case drift points towards the origin or away from it, resp. In this case, solving
\[(\mathcal{A}^{{\bm \theta}^\ast}u)({\bm x})=ru({\bm x})\]
results into solving
\[2(\|{\bm x}\|^2)h''(\|{\bm x}\|^2)+\left(d-2\kappa\|{\bm x}\|\right)h'(\|{\bm x}\|^2)=rh(\|{\bm x}\|^2)\]
on $\{{\bm x}\in \mathbb{R}^d:h'(\|{\bm x}\|^2)\geq 0\}$ and
\[2(\|{\bm x}\|^2)h''(\|{\bm x}\|^2)+\left(d+2\kappa\|{\bm x}\|\right)h'(\|{\bm x}\|^2)=rh(\|{\bm x}\|^2)\]
on on $\{{\bm x}\in \mathbb{R}^d:h'(\|{\bm x}\|^2)\leq0\}$.  Denote now by $M_{a,b}$ and by $W_{a,b}$ the {\em Whittaker functions} of the first and second type, respectively, and define
the functions $\psi_1(y)=u_{\kappa}(\sqrt{y}), \varphi_1(y)=v_{\kappa}(\sqrt{y}), \psi_2(y)=u_{-\kappa}(\sqrt{y})$, and  $\varphi_2(y)=v_{-\kappa}(\sqrt{y})$, where
\begin{align*}
u_{\kappa}(y)&= y^{\frac{1-d}{2}}e^{\kappa y}M_{a_\kappa,b}\left(2\sqrt{2r+\kappa^2}y\right)\\
v_{\kappa}(y)&= y^{\frac{1-d}{2}}e^{\kappa y} W_{a_\kappa,b}\left(2\sqrt{2r+\kappa^2}y\right),
\end{align*}
$b=d/2-1$, and
$$
a_\kappa=\frac{\kappa(d-1)}{2\sqrt{\kappa^2+2r}}.
$$
Making the substitution $h(\|{\bm x}\|^2)=v(\|{\bm x}\|)$ show that the solutions of these ODEs read as (cf. \cite{Lin2004})
\begin{align*}
h\left(\|\bm x\|^2\right) = c_1\psi_1(\|\bm x\|^2)+c_2\varphi_1(\|\bm x\|^2)
\end{align*}
on $\{{\bm x}\in \mathbb{R}^d:h'(\|{\bm x}\|^2)\geq 0\}$ and as
\begin{align*}
h\left(\|\bm x\|^2\right) =  \hat{c}_1\psi_2(\|\bm x\|^2)+\hat{c}_2\varphi_2(\|\bm x\|^2)
\end{align*}
on $\{{\bm x}\in \mathbb{R}^d:h'(\|{\bm x}\|^2)\leq 0\}$. As in the previous subsection, we now let $c\in \mathbb{R}_+$ be an arbitrary reference point and define the twice continuously differentiable function $U_c$ as the solution of the boundary value problem
\begin{align}\label{minimal}
\begin{split}
&2(\|{\bm x}\|^2)U_c''(\|{\bm x}\|^2)+\left(d-2\kappa\|{\bm x}\|\sgn(\|{\bm x}\|^2-c)\right)U_c'(\|{\bm x}\|^2)-rU_c(\|{\bm x}\|^2)=0\\
&U_c(c)=1,\quad U_c'(c)=0.
\end{split}
\end{align}
We again find that $U_c(\|\mathbf{x}\|^2)=\max(\hat{h}_{1c}(\|\mathbf{x}\|^2),\hat{h}_{2c}(\|\mathbf{x}\|^2))$, where
\begin{align*}
\hat{h}_{1c}(\|\mathbf{x}\|^2)&=B_1^{-1}\left(\frac{\psi_1'(c)}{S_1'(c)}\varphi_1(\|\mathbf{x}\|^2)-\frac{\varphi_1'(c)}{S_1'(c)}\psi_1(\|\mathbf{x}\|^2)\right),\\
\hat{h}_{2c}(\|\mathbf{x}\|^2)&=B_2^{-1}\left(\frac{\psi_2'(c)}{S_2'(c)}\varphi_2(\|\mathbf{x}\|^2)-\frac{\varphi_2'(c)}{S_2'(c)}\psi_2(\|\mathbf{x}\|^2)\right),\\
B_{1}&=\frac{\sqrt{2r+\kappa^2}\Gamma(d-1)}{\Gamma\left(\frac{d-1}{2}-a_\kappa\right)},\\
B_{2}&=\frac{\sqrt{2r+\kappa^2}\Gamma(d-1)}{\Gamma\left(\frac{d-1}{2}-a_{-\kappa}\right)},
\end{align*}
$S_1'(y)=e^{2\kappa\sqrt{y}}y^{-\frac{d}{2}}$, and $S_2'(y)=e^{-2\kappa\sqrt{y}}y^{-\frac{d}{2}}$. As in the case of the previous subsection, we define the two  cases associated with the extreme reference points by
\begin{align*}
U_0(y)=\psi_1(y) &=(2\sqrt{\kappa^2+2r})^{\frac{d-1}{2}} e^{\left(\kappa-\sqrt{\kappa^2+2 r}\right)\sqrt{y}} \tilde{M}\left(\frac{(d-1)}{2} \left(1-\frac{\kappa}{\sqrt{\kappa^2+2 r}}\right),d-1,2 \sqrt{\kappa^2+2 r} \sqrt{y}\right)\\
U_\infty(y)= \varphi_2(y) &=(2\sqrt{\kappa^2+2r})^{\frac{d-1}{2}}  e^{-\left(\sqrt{\kappa^2+2 r}+\kappa\right)\sqrt{y}} \tilde{U}\left(\frac{(d-1)}{2}  \left(1+\frac{\kappa}{\sqrt{\kappa^2+2 r}}\right),d-1,2 \sqrt{\kappa^2+2 r}
   \sqrt{y}\right)
\end{align*}
where $\tilde{M}$ and $\tilde{U}$ denote the confluent hypergeometric functions of the first and second type, respectively. It is worth noticing that since the lower boundary is entrance for the underlying diffusion process we have that (cf. p. 19 in \cite{BS15})
\begin{align*}
\lim_{y\rightarrow 0+}\hat{h}_{ic}(y)&=\infty\\
\lim_{y\rightarrow 0+}\frac{\hat{h}_{ic}'(y)}{S_i'(y)}&=B_i^{-1}\frac{\psi_i'(c)}{S_i'(c)}\lim_{y\rightarrow 0+}\frac{\varphi_i'(y)}{S_i'(y)}>-\infty
\end{align*}
when $c\in(0,\infty)$. The upper boundary is, in turn, natural for the underlying diffusion process and, hence, we have that (cf. p. 19 in \cite{BS15})
\begin{align*}
\lim_{y\rightarrow \infty}\hat{h}_{ic}(y)&=+\infty\\
\lim_{y\rightarrow \infty}\frac{\hat{h}_{ic}'(y)}{S_i'(y)}&=+\infty
\end{align*}
when $c\in(0,\infty)$. However, in contrast with natural boundary behavior, we now notice that in the extreme case
$$
\lim_{y\rightarrow0+}\psi_1(y)= (2\sqrt{\kappa^2+2r})^{\frac{d-1}{2}}.
$$
Again, we observe that the function $U_c$ is convex.
\begin{lemma}
The function $U_c(y)$ is strictly convex on $\mathbb{R}_+$.
\end{lemma}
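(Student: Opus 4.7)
The plan is to use the piecewise structure $U_c=\hat h_{1c}$ on $[c,\infty)$ and $U_c=\hat h_{2c}$ on $(0,c]$, show strict convexity on each side from the corresponding ODE in \eqref{minimal}, and then check that the two pieces match in a $C^2$ fashion with a strictly positive second derivative at the gluing point $c$.

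First I would read off the initial data at $c$. Since $\hat h_{ic}(c)=1$ and $\hat h_{ic}'(c)=0$ for $i=1,2$, inserting into the two ODEs underlying \eqref{minimal} immediately gives $\hat h_{1c}''(c)=\hat h_{2c}''(c)=r/(2c)>0$. Differentiating each ODE once and evaluating at $c$, a short computation yields $\hat h_{1c}'''(c)-\hat h_{2c}'''(c)=\kappa r/c^{3/2}>0$, so $\hat h_{1c}-\hat h_{2c}$ behaves like a positive multiple of $(y-c)^3$ near $c$; combined with the asymptotic behaviour of the Whittaker building blocks $\psi_i,\varphi_i$ at $0$ and $\infty$, this identifies $U_c$ with $\hat h_{1c}$ on $[c,\infty)$ and with $\hat h_{2c}$ on $(0,c]$.

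The heart of the argument is strict convexity of $\hat h_{1c}$ on $(c,\infty)$, which I would obtain by two successive contradiction arguments. Assume first that $y_0>c$ is the smallest zero of $\hat h_{1c}'$; then $\hat h_{1c}(y_0)>1$, the ODE gives $\hat h_{1c}''(y_0)=r\hat h_{1c}(y_0)/(2y_0)>0$, and a Taylor expansion forces $\hat h_{1c}'<0$ just to the left of $y_0$, contradicting $\hat h_{1c}'>0$ on $(c,y_0)$. Hence $\hat h_{1c}'>0$ and $\hat h_{1c}>1$ on $(c,\infty)$. Assume next that $y_0>c$ is the smallest zero of $\hat h_{1c}''$. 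From the ODE at $y_0$, $(d-2\kappa\sqrt{y_0})\hat h_{1c}'(y_0)=r\hat h_{1c}(y_0)>0$, so $d-2\kappa\sqrt{y_0}>0$; differentiating the ODE once and substituting $\hat h_{1c}''(y_0)=0$ yields
\[
2y_0\,\hat h_{1c}'''(y_0)=\bigl(r+\kappa/\sqrt{y_0}\bigr)\hat h_{1c}'(y_0)>0,
\]
whereas $\hat h_{1c}''>0$ on $(c,y_0)$ together with $\hat h_{1c}''(y_0)=0$ forces $\hat h_{1c}'''(y_0)\leq 0$; contradiction.

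The argument for $\hat h_{2c}$ on $(0,c)$ is parallel in spirit but simpler: the analogous first step yields $\hat h_{2c}>0$ and $\hat h_{2c}'<0$ there, and then the ``minus'' ODE rewrites as
\[
\hat h_{2c}''(y)=\frac{r\hat h_{2c}(y)-(d+2\kappa\sqrt{y})\hat h_{2c}'(y)}{2y},
\]
in which both terms in the numerator are nonnegative with the first strictly positive, giving $\hat h_{2c}''>0$ directly. Finally, the $C^2$ matching at $c$ with $U_c''(c)=r/(2c)>0$ glues the two strictly convex pieces into a strictly convex function on $\mathbb{R}_+$. The main obstacle is the second contradiction step for $\hat h_{1c}$: unlike the $\hat h_{2c}$ branch, the coefficient $d-2\kappa\sqrt{y}$ in the ``plus'' ODE changes sign on $(c,\infty)$, so convexity cannot be extracted directly from the ODE, and one is forced to pass to the differentiated ODE and use the induced sign of $\hat h_{1c}'''$ at any putative first zero of $\hat h_{1c}''$.
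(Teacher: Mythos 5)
Your proof is correct, but on the right piece it takes a genuinely different route from the paper. On $(0,c]$ the two arguments coincide: using $U_c>0$, $U_c'\leq 0$ (which you establish via the same initial-zero contradiction the paper implicitly relies on) and the ``minus'' ODE, the identity $2yU_c''(y)=rU_c(y)-(d+2\kappa\sqrt{y})U_c'(y)$ gives $U_c''>0$ directly. On $(c,\infty)$ the paper proceeds quite differently: it normalises by the scale density $S_1'$, derives an explicit integral representation
\[
\frac{2yU_c''(y)}{S_1'(y)}=\frac{r}{S_1'(c)}+r\int_c^y\left(2\kappa(\sqrt{y}-\sqrt{t})+r(y-t)\right)U_c(t)\,m_1'(t)\,dt,
\]
and reads off positivity from the nonnegative integrand, whereas you run a two-stage contradiction at a putative first zero: first of $U_c'$ (forcing $U_c'>0$ via the ODE), then of $U_c''$ (forcing $U_c'''(y_0)>0$ via the differentiated ODE while monotonicity would force $U_c'''(y_0)\leq 0$). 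Both are valid; yours is a maximum-principle-style local argument that avoids the speed/scale machinery and the explicit integral, at the cost of two successive contradiction steps, while the paper's representation is more explicit and could be of independent use. One small remark: the side observation $d-2\kappa\sqrt{y_0}>0$ in your second step is correct but is never actually used in the contradiction, since $\hat h_{1c}''(y_0)=0$ is substituted directly into the differentiated ODE; and you could streamline the first step by noting that a first zero $y_0$ of $\hat h_{1c}'$ after $c$, with $\hat h_{1c}'>0$ on $(c,y_0)$, forces $\hat h_{1c}''(y_0)\leq 0$, which already contradicts the ODE without invoking a Taylor expansion.
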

\begin{proof}
$U_c(y)$ is nonnegative and decreasing on $(0,c]$. Consequently, we notice by invoking \eqref{minimal} that
$$
2yU_c''(y)=rU_c(y)-\left(d+2\kappa\sqrt{y}\right)U_c'(y)>0
$$
demonstrating that $U_c(y)$ is strictly convex on $(0,c]$. On the other hand,  \eqref{minimal} also implies that on $(c,\infty)$ we have
$$
\frac{2yU_c''(y)}{S_1'(y)}=\frac{r(U_c(y)-yU_c'(y))}{S_1'(y)}-\left(d-2\kappa\sqrt{y}-ry\right)\frac{U_c'(y)}{S_1'(y)}.
$$
Since
$$
\frac{d}{dy}\frac{U_c(y)-yU_c'(y)}{S_1'(y)}=\left(d-2\kappa\sqrt{y}-ry\right)U_c(y)m_1'(y),
$$
where $m_1'(y)=1/(2yS_1'(y))$, we notice by integrating from $c$ to $y$ that
$$
r\frac{U_c(y)-yU_c'(y)}{S_1'(y)} = \frac{r}{S_1'(c)}+r\int_c^y \left(d-2\kappa\sqrt{t}-rt\right)U_c(t)m_1'(t)dt.
$$
On the other hand, since
$$
\frac{U_c'(y)}{S_1'(y)} = r\int_c^yU_c(t)m_1'(t)dt
$$
we finally find that
$$
\frac{2yU_c''(y)}{S_1'(y)}=\frac{r}{S_1'(c)}+r\int_c^y \left(2\kappa(\sqrt{y}-\sqrt{t})+r(y-t)\right)U_c(t)m_1'(t)dt >0
$$
proving that $U_c(y)$ is strictly convex on $(c,\infty)$ as well.
\end{proof}

Utilizing the It{\^o}-Döblin theorem now shows that the process $Y_t=\|\mathbf{X}_t\|^2$ satisfies the SDE
\begin{align}\label{sde2}
dY_t = \left(d-2\kappa\sqrt{Y_t}\sgn(Y_t-c)\right)dt+2\sqrt{Y_t}d\tilde{W}_t^{{\bm \theta}_c},\quad Y_0=\|\mathbf{x}\|^2,
\end{align}
where $\tilde{W}_t^{{\bm \theta}_c}$ is a Brownian motion under the measure $\mathbb{Q}^{{\bm \theta}_c}$ characterized by the density generator
$$
{\bm \theta}_{ct}=\kappa \sgn(\|\mathbf{X}_t\|^2-c) \frac{\mathbf{X}_t}{\|\mathbf{X}_t\|}.
$$
Hence, we again observe that the controlled process has a stationary distribution for a fixed reference point $c$.
In the present case it reads as
$$
p_c(y)=\frac{m_c'(y)}{m_c(0,\infty)}
$$
where
$$
m_c'(y)=\frac{1}{2}y^{\frac{d}{2}-1}e^{-2\kappa|\sqrt{y}-\sqrt{c}|}
$$
and
$$
m_c(0,\infty)=\frac{1}{2}(2\kappa)^{-d}\left(e^{2\kappa\sqrt{c}}\Gamma(d,2\kappa \sqrt{c})+e^{-2\kappa\sqrt{c}}\int_0^{2\kappa\sqrt{c}}t^{d-1}e^{t}dt\right).
$$
It is also worth noticing that utilizing the It{\^o}-Döblin theorem to the process $Z_t:=\sqrt{Y_t}=\|\mathbf{X}_t\|$ results into the SDE
$$
dZ_t=\left(\frac{d-1}{2Z_t}-\kappa\sgn(Z_t-\sqrt{c})\right)dt+d\tilde{W}_t^{{\bm \theta}_c},\quad Z_0=\|\mathbf{x}\|,
$$
which constitutes a Bessel process of order $d/2-1$ with an alternating drift.

A modified characterization of the representation presented in Theorem \ref{thm1} is naturally valid in this case as well, since in the present case the set of admissible reference points is $[0,\infty]$. It is also worth noticing that the function $U_c(y)$ is no longer symmetric and, hence,  similar representations with the ones developed in Theorem \ref{even} and in Theorem \ref{periodic} are no longer possible. Moreover, since the lower boundary is entrance for the underlying process, policies which are radically different from the case considered in the previous section may appear. We will illustrate this point explicitly in the following subsection.

\subsection{Nonlinear Straddle Option}
In order to illustrate the peculiarities associated with the present case, let us consider the nonlinear straddle option case $\hat{F}(y)=|\sqrt{y}-K|$, where $K>0$ is an exogenously set fixed strike price. Consider first the behavior of the function
$$
(\mathcal{L}_{\psi_{1}}\hat{F})(y)=(\sqrt{y}-K)\frac{\psi_{1}'(y)}{S_1'(y)}-\frac{1}{2\sqrt{y}}\frac{\psi_{1}(y)}{S_1'(y)}.
$$
We notice that $(\mathcal{L}_{\psi_{1}}\hat{F})(0+)=0$ and
$$
(\mathcal{L}_{\psi_{1}}\hat{F})'(y)=\left(r(\sqrt{y}-K)+\kappa - \frac{d-1}{2\sqrt{y}}\right)\psi_{1}(y)m_1'(y)
$$
demonstrating that
$$
(\mathcal{L}_{\psi_{1}}\hat{F})(y)=\int_0^y\left(r(\sqrt{t}-K)+\kappa - \frac{d-1}{2\sqrt{t}}\right)\psi_{1}(t)m_1'(t)dt.
$$
Since $r(\sqrt{y}-K)+\kappa - (d-1)/(2\sqrt{y})$ is monotonically increasing and satisfies the inequality $r(\sqrt{y}-K)+\kappa - (d-1)/(2\sqrt{y})\gtreqqless 0$ for $y\gtreqqless \tilde{y}_0$, where $\tilde{y}_0$ is the unique root of $r(\sqrt{y}-K)+\kappa - (d-1)/(2\sqrt{y})=0$, we find that for $y>\hat{y}>\tilde{y}_0$ we have that
\begin{align*}
(\mathcal{L}_{\psi_{1}}\hat{F})(y)&=(\mathcal{L}_{\psi_{1}}\hat{F})(\hat{y})+\int_{\hat{y}}^y\left(r(\sqrt{t}-K)+\kappa - \frac{d-1}{2\sqrt{t}}\right)\psi_{1}(t)m_1'(t)dt\\
&\geq (\mathcal{L}_{\psi_{1}}\hat{F})(\hat{y})+\left(r(\sqrt{\hat{y}}-K)+\kappa - \frac{d-1}{2\sqrt{\hat{y}}}\right)\int_{\hat{y}}^y\psi_{1}(t)m_1'(t)dt\\
&= (\mathcal{L}_{\psi_{1}}\hat{F})(\hat{y})+\left(r(\sqrt{\hat{y}}-K)+\kappa - \frac{d-1}{2\sqrt{\hat{y}}}\right)\frac{1}{r}\left(\frac{\psi_{1}'(y)}{S_1'(y)}-\frac{\psi_{1}'(\hat{y})}{S_1'(\hat{y})}\right).
\end{align*}
Hence, $\lim_{y\rightarrow\infty}(\mathcal{L}_{\psi_{1}}\hat{F})(y)=\infty$ demonstrating that
there is a unique $y^{\ast}_{K}>\tilde{y}_0$ satisfying the condition
$
(\mathcal{L}_{\psi_{1}}\hat{F})(y^{\ast}_{K})=0.
$
Noticing that
$$
\frac{d}{dy}\frac{\sqrt{y}-K}{\psi_{1}(y)}=-\frac{S_1'(y)}{\psi_{1}^2(y)}(\mathcal{L}_{\psi_{1}}\hat{F})(y)
$$
in turn demonstrates that $y^{\ast}_{K}>K^2$ is the unique threshold at which the ratio
$$
\Pi_0(y)=\frac{\sqrt{y}-K}{\psi_{1}(y)}
$$
is maximized. Moreover, $\partial y^{\ast}_{K}/\partial K >0$, $\lim_{K\rightarrow\infty}y^{\ast}_{K}=\infty$, and $\lim_{K\rightarrow0+}y^{\ast}_{K}=y^{\ast}_{0}>0$, where the threshold $y^{\ast}_{0}\in \mathbb{R}_+$
is the unique root of the first order optimality condition
$$
\psi_1(y^\ast_{0})=2\psi_1'(y^\ast_{0})y^{\ast}_{0}.
$$
Define now the monotonically increasing and continuously differentiable function $\tilde{V}_\kappa:\mathbb{R}_+\mapsto \mathbb{R}_+$ as
$$
\tilde{V}_\kappa(y)= \psi_{1}(y)\sup_{x\geq y}\left\{\frac{\sqrt{x}-K}{\psi_{1}(x)}\right\}=\begin{cases}
\sqrt{y}-K,&y\in [y^{\ast}_{K},\infty),\\
\Pi_0(y^{\ast}_{K})\psi_{1}(y),&y\in(0,y^{\ast}_{K}).
\end{cases}
$$
Since $\tilde{V}_\kappa(y)$ is nonnegative and dominates $\sqrt{y}-K$ for all $y\in \mathbb{R}_+$ it dominates $(\sqrt{y}-K)^+$ as well. Hence, we observe by utilizing similar arguments as in the proof of Theorem \ref{even} that
$$
\tilde{V}_\kappa(y)=\sup_{\tau\in\mathcal{T}}\mathbb{E}_{y}^{\mathbb{Q}^{{\bm \theta}_0}}\left[e^{-r\tau}\left(\sqrt{Y_\tau}-K\right)^+\right].
$$
Given this function we immediately notice that if condition
$$
\lim_{y\rightarrow 0+}\Pi_0(y^{\ast}_{K})\psi_{1}(y) = \Pi_0(y^{\ast}_{K})(2\sqrt{\kappa^2+2r})^{\frac{d-1}{2}}\geq K
$$
is met, then $\tilde{V}_\kappa(y)$ dominates the exercise payoff $|\sqrt{y}-K|$ for all $y\in \mathbb{R}_+$ as well. Therefore, we notice that in that case
$$
\tilde{V}_\kappa(y)=\sup_{\tau\in\mathcal{T}}\mathbb{E}_{y}^{\mathbb{Q}^{{\bm \theta}_0}}\left[e^{-r\tau}|\sqrt{Y_\tau}-K|\right].
$$

However, if
\begin{align}\label{valatorig}
\lim_{y\rightarrow 0+}\Pi_0(y^{\ast}_{K})\psi_{1}(y) = \Pi_0(y^{\ast}_{K})(2\sqrt{\kappa^2+2r})^{\frac{d-1}{2}}<K
\end{align}
then the optimal policy is no longer a standard single boundary policy. To see that this is indeed the case
consider the behavior of the ratio
$$
\hat{\Pi}_c(y)=\frac{|\sqrt{y}-K|}{U_c(y)}
$$
for all $c\in (0,\infty)$ and $y\in \mathbb{R}_+$.  Define now for an arbitrary state $y\in \mathbb{R}_+$ the continuous difference $D:\mathbb{R}_+\mapsto \mathbb{R}$ as
$$
D(c)=\sup_{w\geq y}\hat{\Pi}_c(w)-\sup_{w\leq y}\hat{\Pi}_c(w).
$$
Consider first the extreme case $D(0)$. It is clear from our analysis on the single boundary case treated above that $\hat{\Pi}_0(y)$ is monotonically decreasing on $(0,K^2)\cup (y_K^\ast,\infty)$,
monotonically increasing on $(K^2,y_K^\ast)$, and satisfies the limiting conditions $\lim_{y\rightarrow \infty}\hat{\Pi}_0(y)=0$ and
$$
\lim_{y\rightarrow 0}\hat{\Pi}_0(y) = (2\sqrt{\kappa^2+2r})^{\frac{1-d}{2}}K>\Pi_0(y^{\ast}_{K})
$$
by assumption \eqref{valatorig}. Combining these observations show that $\hat{\Pi}_0(0)>\hat{\Pi}_0(y)$ for all $y\in \mathbb{R}_+$ and, consequently, that $D(0)<0$. Consider now, in turn, the other extreme setting $D(\infty)$. Utilizing now completely analogous arguments as before, we notice that $\hat{\Pi}_\infty(y)$ is monotonically increasing on $(K^2,\infty)$, bounded for $y\in(0,\infty)$,
and satisfies the limiting conditions $\hat{\Pi}_\infty(0)=0$ and
$
\lim_{y\rightarrow\infty}\hat{\Pi}_\infty(y)=\infty.
$
Consequently, we notice that for $y\in(0,\infty)$ we have $\sup_{w\leq y}\hat{\Pi}_\infty(w)<\infty$,
$$
\sup_{w\geq y}\hat{\Pi}_\infty(w)=\infty
$$
and, therefore, that $\lim_{c\rightarrow\infty}D(c)=\infty$. Combining these results with the continuity of the difference $D(c)$ proves that there is at least one $c^\ast\in\mathbb{R}_+$ such that
$D(c^\ast)=0$ implying that
$$
\sup_{w\geq y}\hat{\Pi}_{c^\ast}(w)=\sup_{w\leq y}\hat{\Pi}_{c^\ast}(w).
$$
Moreover, the optimal thresholds $y_i^\ast, i=1,2$ satisfy the ordinary first order optimality conditions
$$
\frac{h_{ic^\ast}(y_i^\ast)}{2\sqrt{y_i^\ast}}=h_{ic^\ast}'(y_i^\ast)(\sqrt{y_i^\ast}-K),\quad i=1,2.
$$
In this case the value reads as
$$
V_\kappa(y)=\begin{cases}
\sqrt{y}-K,&y\geq y_1^\ast\\
\hat{\Pi}_{c^\ast}(y_1^\ast)h_{1c^\ast}(y),&y_2^\ast<y<y_1^\ast\\
K-\sqrt{y},&y\leq y_2^\ast.
\end{cases}
$$
Naturally, the set $(0,y_2^\ast)\cup(y_1^\ast,\infty)$ constitutes the stopping set in the present example.

In order to illustrate our findings numerically, we now assume that $r = 0.1, \kappa = 0.02$, and $d = 5$ (implying that the critical cost below which the problem becomes a single boundary problem is $K\approx 0.975222$). The two boundary setting is illustrated in Figure \ref{Bessel} under the assumption that $K=4$ (implying that $y_2^\ast=3.85108, y_1^\ast=63.4344$, and $c^\ast=9.07278$).
\begin{figure}[!ht]
\begin{center}
\includegraphics[width=0.6\textwidth]{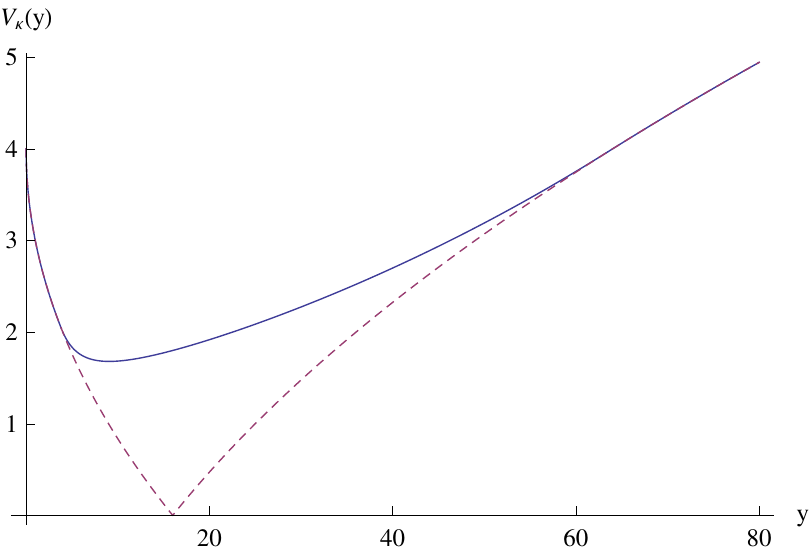}
\caption{\small The value (uniform) and exercise payoff (dashed)}\label{Bessel}
\end{center}
\end{figure}
The single boundary setting is, in turn, illustrated in Figure \ref{Bessel2} under the assumption that $K=0.85$ (implying that $y_{0.85}^\ast=4.7294$).
\begin{figure}[!ht]
\begin{center}
\includegraphics[width=0.6\textwidth]{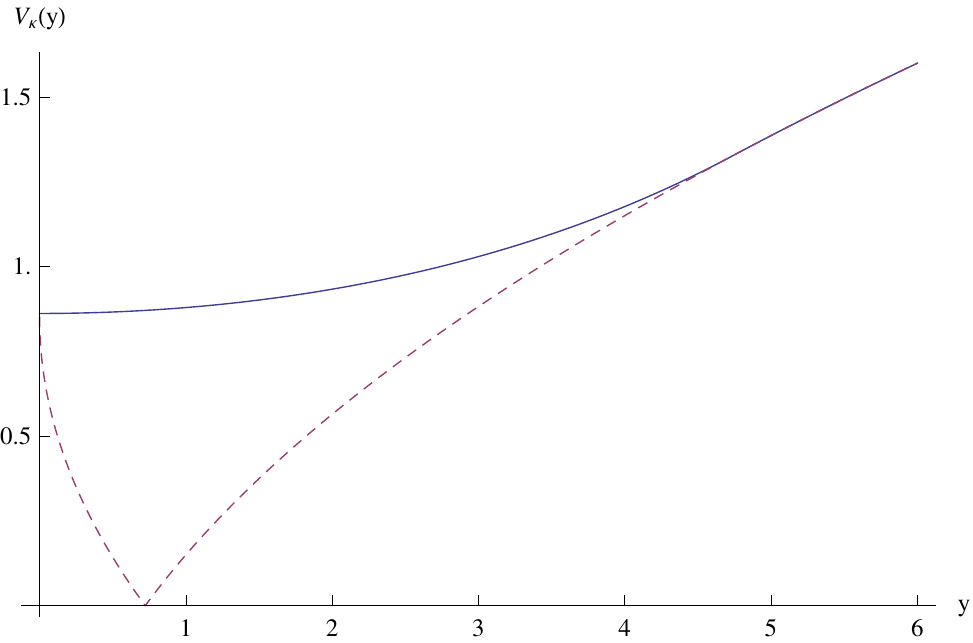}
\caption{\small The value (uniform) and exercise payoff (dashed)}\label{Bessel2}
\end{center}
\end{figure}

\subsection{A truly two-dimensional modification}
The explicit solvability of the problem described before is based on the dimension reduction due to the symmetry of the situation. Even when slightly breaking this symmetry, there is usually no hope to find such explicit solutions anymore. In the rest of this section, we will illustrate this by an example and show how these more general problems may be treated. We consider again the radially symmetric payoffs \eqref{radialpayoff}, but instead of assuming $\|{\bm \theta}_t \|^2\leq \kappa^2$  for the density process, we now assume that
\[\|{\bm \theta}_t \|_\infty\leq \kappa,\mbox{ i.e., }\max\{|\theta_{1t}|,|\theta_{2t}|\}\leq \kappa\]
and denote the set of all corresponding probability measures by $\hat{\mathcal P}^\kappa.$ We note that this ambiguity structure has been considered in \cite{AlCh2019}. We write
\begin{align*}
\hat V_\kappa(\bm x) = \sup_{\tau\in \mathcal{T}}\inf_{\mathbb{Q}^{{\bm \theta}}\in \hat{\mathcal{P}}^\kappa}\mathbb{E}_{\bm x}^{{\mathbb{Q}^{{\bm \theta}}}}\left[e^{-r\tau}F(\mathbf{X}_\tau)\mathbbm{1}_{\{\tau < \infty\}}\right]
\end{align*}
and $\hat C_\kappa$ for the corresponding continuation set. As $\frac{1}{\sqrt{d}}\|\cdot\|\leq \|\cdot\|_\infty\leq \|\cdot\|$, it is clear that for all $\bm x$
\[V_{\kappa \sqrt{d}}(\bm x)\leq \hat V_\kappa(\bm x)\leq V_{\kappa}(\bm x) \]
and therefore
\[C_{\kappa\sqrt{d}}\subseteq \hat C_{\kappa}\subseteq C_{\kappa}.\]
For the sake of simplicity, we now restrict our attention to the case $d=2$ and $F(y)=y$. In this case, it is -- using the results of this section  -- easily seen that $C_{\kappa\sqrt{d}}$ and $C_\kappa$ are circles around 0. Although there is little hope for finding $\hat V_\kappa$ and $\hat C_{\kappa}$ explicitly, it is easy to infer the structure of the solution: The worst case measure is characterized by the density generator
\[\hat{\bm{\theta}}^\ast=(\kappa \sgn(x_{1}),\kappa \sgn(x_{2})).\]
Due to symmetry of the situation, the optimal stopping problem to be solved can be written as
\[\hat V_\kappa({\bf x})=\sup_{\tau\in \mathcal{T}}\mathbb{E}_{\bm x}^{\hat{\bm{\theta}}^\ast}\left[e^{-r\tau}F(\mathbf{X}_\tau)\mathbbm{1}_{\{\tau < \infty\}}\right],\]
for ${\bf x}$ is in the upper quadrant $\R_+^2$, where $X$ is a Brownian motion with drift $(-\kappa,-\kappa)$ and (orthogonal) reflection on the boundaries of $\R_+^2$. Note that reflected Brownian motion in the quadrant were studied extensively, see \cite{MR628959,MR799421} to mention just two. Recently, the Green kernel has been found semi-explicitly (in the transient case), see \cite{Franceschi:aa}. This opens the door to characterize the unknown optimal stopping boundary using integral equation techniques, see \cite{MR2256030} for the general theory and \cite{MR3958442} for a specific setting quite close to this one.

\section{Conclusions}

We analyzed the impact of Knightian uncertainty on the optimal timing policy of an ambiguity averse decision maker in the case where the underlying follows a multidimensional Brownian motion. We identified two special cases under which the problem can be explicitly solved and illustrated our findings in explicitly parameterized examples. Our results indicate that Knightian uncertainty does not only accelerate the optimal timing policy in comparison with the unambiguous benchmark case, it also may add stability to the dynamics of the underlying under the worst case measure. More precisely, even thought the underlying multidimensional Brownian motion does not converge in a long run to a stationary distribution, the controlled process does. This observation shows that ambiguity may in some circumstances have a profound and nontrivial impact on the underlying dynamics.

This study modeled the underlying random factor dynamics as a multidimensional Brownian motion and focused on two functional forms permitting the utilization of dimension reduction techniques and in that way resulting into stopping problems of linear diffusions. There is at least three natural directions towards which our chosen modeling framework could be attempted to be extended. First, even though most standard factor models rely on linear combinations of the driving factors, it would naturally be of interest to analyze how potential nonlinearities would affect the optimal timing decision in the presence of ambiguity. Especially, introducing state-dependent factors would cast light on the mechanisms how nonlinearities in factor dynamics affect the decisions of ambiguity averse decision makers. Second, carrying out a thorough analysis of the truly two-dimensional modification presented in subsection 4.2 would provide valuable information on the difference between the problems allowing dimensionality reduction and the problems which do not. Third, adding Bayesian learning to the considered class of problems would also be an interesting direction towards which our analysis could be extended. All these extensions are extremely challenging and at the present outside the scope of the this study.

\bibliographystyle{apalike}
\bibliography{radiallysymmetric}

\end{document}